\newtheorem{theorem}{Theorem}[section]
\newtheorem{proposition}[theorem]{Proposition}
\theoremstyle{definition}
\newtheorem{definition}[theorem]{Definition}
\newtheorem{example}[theorem]{Example}
\theoremstyle{remark}
\newtheorem{remark}[theorem]{Remark}
\numberwithin{equation}{section}
\begin{document}
	
	\setcounter{page}{1}
	
	\title[Continuous $K$-biframes in Hilbert spaces]{Continuous $K$-biframes in Hilbert spaces}

	\author[A. Karara, M. Rossafi]{Abdelilah Karara$^{1}$ and Mohamed Rossafi$^2$$^{*}$}
	
	\address{$^{1}$Department of Mathematics Faculty of Sciences, University of Ibn Tofail, B.P. 133, Kenitra, Morocco}
	\email{\textcolor[rgb]{0.00,0.00,0.84}{ abdelilah.karara.sm@gmail.com}}
	\address{$^{2}$Department of Mathematics Faculty of Sciences, Dhar El Mahraz University Sidi Mohamed Ben Abdellah, Fes, Morocco}
	\email{\textcolor[rgb]{0.00,0.00,0.84}{mohamed.rossafi@usmba.ac.ma; mohamed.rossafi@uit.ac.ma}}
	
	\subjclass[2010]{42C15, 46C07, 46C50.}
	
	\keywords{biframe, Continuous $K$-biframe, Hilbert spaces.}
	
	\date{Received: xxxxxx; Revised: yyyyyy; Accepted: zzzzzz.
		\newline \indent $^{*}$ Corresponding author}
	
	\begin{abstract}
In this paper, we will introduce the concept of a continuous $K$-biframe for Hilbert spaces and we present various examples of continuous $K$-biframes. Furthermore, we investigate their characteristics from the perspective of operator theory by establishing various properties.
\end{abstract}
\maketitle

\baselineskip=12.4pt

\section{Introduction}
 
\smallskip\hspace{.6 cm}The notion  of frames in Hilbert spaces has been introduced by Duffin and Schaffer \cite{Duffin} in 1952 to research certain difficult nonharmonic Fourier series problems, following the fundamental paper \cite{Daubechies} by Daubechies, Grossman and Meyer, frame theory started to become popular, especially in the more specific context of Gabor frames and wavelet frames \cite{Gabor}. Currently, frames are frequently employed in cryptography, system modeling, quantum information processing, sampling,etc.\cite{Ferreira, Strohmer, Blocsli}.

The idea of pair frames, which refers to a pair of sequences in a Hilbert space, was first presented in \cite{Fer}. Parizi, Alijani and Dehghan \cite{MF} studied Biframe, which is a generalization of controlled frame in Hilbert space. The concept of a frame is defined from a single sequence but to define a biframe we will need two sequences. In fact, the concept of biframe is a generalization of controlled frames and a special case of pair frames.

 In this paper, we will introduce the concept of continuous $K$-biframes in Hilbert space and present some examples of this type of frame. Moreover, we investigate a characterization of continuous $K$-biframe by using the continuous biframe operator. Finally, in our exploration of continuous biframes, we investigate their characteristics from the perspective of operator theory by establishing various properties.

\section{Preliminaries}
Throughout this paper, $\mathcal{H}$ represents a separable Hilbert space. The notation $\mathcal{B}(\mathcal{H},\mathcal{K})$ denotes the collection of all bounded linear operators from $\mathcal{H}$ to the Hilbert space $\mathcal{K}$. When $\mathcal{H} = \mathcal{K}$, this set is denoted simply as $\mathcal{B}(\mathcal{H})$. We will use $\mathcal{N}(\mathcal{T})$ and $\mathcal{R}(\mathcal{T})$ for the null  and range space of an operator $\mathcal{T}\in \mathcal{B}(\mathcal{H})$. Also $\mathrm{GL}^{+}(\mathcal{H})$ is the collection of all invertible, positive bounded linear operators acting on $\mathcal{H}$.
\begin{definition}\cite{Rahimi}
Let $\mathcal{H}$ be a complex Hilbert space and $(\Omega, \mu)$ be a measure space with positive measure $\mu$. A mapping $\mathcal{X} : \Omega \to \mathcal{H}$ is called a continuous frame with respect to $\left(\Omega, \mu\right)$ if
\begin{itemize}
\item[$(i)$] $\mathcal{X}$ is weakly-measurable, i.e., for all $f \in \mathcal{H}$, $\omega \mapsto \langle  f, \mathcal{X}(\omega)\rangle $ is a measurable function on $\Omega$,
\item[$(ii)$]there exist constants $0 < A \leq B < \infty$ such that
\end{itemize}
\[A \Vert f \Vert^{2} \leq \int_{\Omega}\vert\langle f, \mathcal{X}(\omega)\rangle\vert^{2} d\mu \leq B\Vert f \Vert^{2},\]
for all $f \in \mathcal{H}$. 
\end{definition}
The constants $A$ and $B$ are called continuous frame bounds. If $A = B$, then it is called a tight continuous frame. If the mapping $\mathcal{X}$ satisfies only the right inequality, then it is called continuous Bessel mapping with Bessel bound $B$.

Let $\mathcal{X} : \Omega \to \mathcal{H}$ be a continuous frame for $\mathcal{H}$. Then The synthesis operator $\mathcal{T}_{\mathcal{X}} : L^{2}\left(\Omega,\mu\right) \to \mathcal{H}$ weakly defined by
\[\langle  \mathcal{T}_{\mathcal{X}}(\varphi), h\rangle  = \int_{\Omega}\varphi(\omega)\langle  \mathcal{X}(\omega), f\rangle d\mu\]where $\varphi \in L^{2}\left(\Omega,\mu\right)$ and $f \in \mathcal{H}$ and its adjoint operator called the analysis operator $\mathcal{T}^{\ast}_{\mathcal{X}} : \mathcal{H} \to L^{2}\left(\Omega,\mu\right)$ is given by  
\[\mathcal{T}^{\ast}_{\mathcal{X}}\mathcal{X}(\omega) = \langle  f, \mathcal{X}(\omega)\rangle \;,\; f \in \mathcal{H},\;\; \omega \in \Omega.\]

The frame operator $S_{\mathcal{X}} : \mathcal{H} \to \mathcal{H}$ is weakly defined by
\[\langle  S_{\mathcal{X}}x, y\rangle  = \int_{\Omega}\langle  x, \mathcal{X}(\omega)\rangle \langle  \mathcal{X}(\omega), y\rangle d\mu, \;\forall x,y \in \mathcal{H}.\]

\begin{definition}\cite{Limaye} Let $\mathcal{H}$ be a Hilbert space, and suppose that $\mathcal{T} \in \mathcal{B}(\mathcal{H})$ has a closed range. Then there exists an operator $\mathcal{T}^{+} \in \mathcal{B}(\mathcal{H})$ for which
$$
N\left(\mathcal{T}^{+}\right)=\mathcal{R}(\mathcal{T})^{\perp}, \quad R\left(\mathcal{T}^{+}\right)=N(\mathcal{T})^{\perp}, \quad \mathcal{T} \mathcal{T}^{+} x=x, \quad f \in \mathcal{R}(\mathcal{T}) .
$$

We call the operator $\mathcal{T}^{+}$ the pseudo-inverse of $\mathcal{T}$. This operator is uniquely determined by these properties. In fact, if $\mathcal{T}$ is invertible, then we have $\mathcal{T}^{-1}=\mathcal{T}^{+}$.
\end{definition}
\begin{theorem}\cite{Douglas} \label{Doglas th} 
 Let $\mathcal{H}$ be a Hilbert space and $\mathcal{T}_{1},\mathcal{T}_{2}\in\mathcal{B}(\mathcal{H})$. The following statements are equivalent:
 \begin{enumerate}
 \item $\mathcal{R}(\mathcal{T}_{1})\subset\mathcal{R}(\mathcal{T}_{2})$
 \item  $\mathcal{T}_{1} \mathcal{T}_{1}^{\ast} \leq \lambda^2 \mathcal{T}_{2}\mathcal{T}_{2}^{\ast}$ for some $\lambda \geq 0$;
 \item $\mathcal{T}_{1} = \mathcal{T}_{2} U$ for some $U\in\mathcal{B}(\mathcal{H})$.
 
 \end{enumerate}
\end{theorem}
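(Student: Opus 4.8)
The plan is to prove the theorem as a cycle of implications in which the two ``trivial'' directions are dispatched first. Observe that $(3)\Rightarrow(1)$ is immediate: if $\mathcal{T}_1=\mathcal{T}_2U$ then $\mathcal{R}(\mathcal{T}_1)=\mathcal{R}(\mathcal{T}_2U)\subseteq\mathcal{R}(\mathcal{T}_2)$. Likewise $(3)\Rightarrow(2)$ is a short computation: from $\mathcal{T}_1=\mathcal{T}_2U$ we get $\mathcal{T}_1\mathcal{T}_1^{\ast}=\mathcal{T}_2UU^{\ast}\mathcal{T}_2^{\ast}$, and since $UU^{\ast}\leq\|U\|^2 I$ this yields $\mathcal{T}_1\mathcal{T}_1^{\ast}\leq\|U\|^2\,\mathcal{T}_2\mathcal{T}_2^{\ast}$, so that $(2)$ holds with $\lambda=\|U\|$. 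Hence it suffices to prove $(2)\Rightarrow(3)$ and $(1)\Rightarrow(3)$.

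For $(2)\Rightarrow(3)$, the idea is to build the intertwining operator on the range of $\mathcal{T}_2^{\ast}$. Rewriting the inequality in $(2)$ as $\langle \mathcal{T}_1\mathcal{T}_1^{\ast}f,f\rangle\leq\lambda^2\langle\mathcal{T}_2\mathcal{T}_2^{\ast}f,f\rangle$ gives the pointwise norm estimate $\|\mathcal{T}_1^{\ast}f\|\leq\lambda\|\mathcal{T}_2^{\ast}f\|$ for every $f\in\mathcal{H}$. I would then define $U_0:\mathcal{R}(\mathcal{T}_2^{\ast})\to\mathcal{H}$ by $U_0(\mathcal{T}_2^{\ast}f)=\mathcal{T}_1^{\ast}f$; the estimate shows simultaneously that $U_0$ is well defined (if $\mathcal{T}_2^{\ast}f=\mathcal{T}_2^{\ast}g$ then $\mathcal{T}_1^{\ast}f=\mathcal{T}_1^{\ast}g$) and that it is bounded with $\|U_0\|\leq\lambda$. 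Extending $U_0$ continuously to $\overline{\mathcal{R}(\mathcal{T}_2^{\ast})}$ and setting it equal to $0$ on the orthogonal complement $\mathcal{N}(\mathcal{T}_2)$ produces a bounded operator satisfying $U_0\mathcal{T}_2^{\ast}=\mathcal{T}_1^{\ast}$. Taking adjoints gives $\mathcal{T}_2U_0^{\ast}=\mathcal{T}_1$, so $U:=U_0^{\ast}$ is the desired factor.

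For $(1)\Rightarrow(3)$, the plan is to define $U$ by choosing, for each $f$, the minimal-norm solution of $\mathcal{T}_2g=\mathcal{T}_1f$. Since $\mathcal{R}(\mathcal{T}_1)\subseteq\mathcal{R}(\mathcal{T}_2)$, for each $f$ the vector $\mathcal{T}_1f$ lies in $\mathcal{R}(\mathcal{T}_2)$, so there is a unique $g\in\mathcal{N}(\mathcal{T}_2)^{\perp}$ with $\mathcal{T}_2g=\mathcal{T}_1f$; set $Uf=g$. Linearity is clear, and $\mathcal{T}_2U=\mathcal{T}_1$ by construction. The crux is boundedness of $U$, which I would obtain from the closed graph theorem: if $f_n\to f$ and $Uf_n\to h$, then continuity of $\mathcal{T}_2$ gives $\mathcal{T}_2h=\lim\mathcal{T}_2Uf_n=\lim\mathcal{T}_1f_n=\mathcal{T}_1f=\mathcal{T}_2Uf$, so $h-Uf\in\mathcal{N}(\mathcal{T}_2)$; on the other hand each $Uf_n\in\mathcal{N}(\mathcal{T}_2)^{\perp}$ and this subspace is closed, whence $h\in\mathcal{N}(\mathcal{T}_2)^{\perp}$ and therefore $h-Uf\in\mathcal{N}(\mathcal{T}_2)^{\perp}$. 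The two memberships force $h=Uf$, so the graph of $U$ is closed and $U\in\mathcal{B}(\mathcal{H})$.

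The main obstacle in both substantive directions is the same: producing a factor that is genuinely bounded rather than merely densely defined. In $(2)\Rightarrow(3)$ this is handled automatically by the quantitative estimate $\|U_0\|\leq\lambda$ coming from the operator inequality, whereas in $(1)\Rightarrow(3)$ the inclusion of ranges carries no such quantitative information, and boundedness must instead be extracted from completeness via the closed graph theorem. Care is also needed in $(1)\Rightarrow(3)$ to restrict to $\mathcal{N}(\mathcal{T}_2)^{\perp}$ so that $U$ is single-valued; this is exactly what guarantees uniqueness of the minimal-norm solution and makes the closed-graph bookkeeping work.
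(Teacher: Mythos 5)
The paper does not prove this statement at all: it is quoted verbatim from Douglas's 1966 paper and used as a black box, so there is no in-paper argument to compare against. Your proof is correct and is essentially Douglas's original one — the cycle $(3)\Rightarrow(1)$, $(3)\Rightarrow(2)$ trivially, then $(2)\Rightarrow(3)$ via the densely defined operator $U_0(\mathcal{T}_2^{\ast}f)=\mathcal{T}_1^{\ast}f$ bounded by the majorization estimate, and $(1)\Rightarrow(3)$ via the unique solution in $\mathcal{N}(\mathcal{T}_2)^{\perp}$ together with the closed graph theorem. All the delicate points (well-definedness of $U_0$, extension by zero on $\mathcal{N}(\mathcal{T}_2)$, uniqueness of the minimal-norm solution, and closedness of the graph) are handled correctly.
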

\section{Continuous $K$-biframe in Hilbert spaces }
In this section, we begin by presenting the definition of a continuous $K$-biframe in a Hilbert spaces, followed by a discussion of some of its properties. 
\begin{definition}\label{def1.01}
A pair $(\mathcal{X}, \mathcal{Y}) = \left(\mathcal{X} : \Omega \to \mathcal{H},\; \mathcal{Y} : \Omega \to \mathcal{H}\right)$ of mappings is called a continuous $K$-biframe for $\mathcal{H}$ with respect to $\left(\Omega, \mu\right)$ if:
\begin{itemize}
\item[$(i)$] $\mathcal{X}, \mathcal{Y}$ are weakly-measurable, i.e., for all $f \in \mathcal{H}$, $\omega \mapsto \langle   f, \mathcal{X}(\omega)\rangle $ and $\omega \mapsto \langle  f, \mathcal{Y}(\omega)\rangle $ are measurable functions on $\Omega$,
\item[$(ii)$]there exist constants $0 < A \leq B < \infty$ such that for all $f \in \mathcal{H}$:
\end{itemize}
\begin{align}
A\Vert K^{\ast} f \Vert^{2} \leq \int_{\Omega}\langle  f, \mathcal{X}(\omega)\rangle \langle  \mathcal{Y}(\omega), f\rangle d\mu \leq B\Vert f \Vert^{2}.\label{3.eqq3.11}
\end{align}
The constants $A$ and $B$ are called continuous $K$-biframe bounds. If $A = B$, then it is called a tight continuous $K$-biframe and if $A = B = 1$, then it is called Parseval continuous $K$-biframe . 

If $(\mathcal{X}, \mathcal{Y})$ satisfies only the right inequality (\ref{3.eqq3.11}), then it is called continuous $K$-biframe Bessel mapping with Bessel bound $B$. 
\end{definition}
\begin{remark}
Let $\mathcal{X} : \Omega \to \mathcal{H}$ be a mapping. Consequently, in light of the Definition \ref{def1.01}, we express that
\begin{itemize}
\item[$(i)$]If $(\mathcal{X}, \mathcal{X})$ is a continuous $K$-biframe for $\mathcal{H}$, then $\mathcal{X}$ is a continuous frame for $\mathcal{H}$.
\item[$(ii)$]If $P \in GL^{+}(\mathcal{H})$, $(\mathcal{X}, P\mathcal{X})$ is a continuous $K$-biframe for $\mathcal{H}$, then $\mathcal{X}$ is a $P$--controlled continuous frame for $\mathcal{H}$,
\item[$(iii)$]If $P, Q \in GL^{+}(\mathcal{H})$, $(P\mathcal{X}, Q\mathcal{X})$ is a continuous $K$-biframe for $\mathcal{H}$, then $\mathcal{X}$ is a $(P, Q)$--controlled continuous frame for $\mathcal{H}$.  
\end{itemize} 
\end{remark}

We now provide some examples that verify the description given above.
\begin{example}\label{3exm3.11}
Let $\left\{e_k\right\}_{i=1}^{\infty}$ be an orthonormal basis for $\mathcal{H}$. We consider two sequences $\mathcal{X}=\left\{f_i\right\}_{i=1}^{\infty}$ and $\mathcal{Y}=\left\{g_i\right\}_{i=1}^{\infty}$ defined as follows: 
\begin{align*}
&\mathcal{X} = \left\{e_{1}, e_{1}, e_{1}, e_{2}, e_{3}, \cdots\cdots\right\},\\
&\mathcal{Y} = \left\{0, e_{1}, e_{1},  e_{2}, e_{3}, \cdots\cdots\right\}.
\end{align*}
We have $\Omega = \bigcup_{i = 1}^{\infty}\Omega_{i}$, where $\left\{\Omega_{i}\right\}_{i = 1}^{\infty}$ is a sequence of disjoint measurable subsets of $\Omega$ with $\mu\left(\Omega_{i}\right) < \infty$. For any $\omega \in \Omega$, we define the mappings $\mathcal{X} : \Omega \to \mathcal{H}$ by $\mathcal{X}(\omega) = \left(\mu\left(\Omega_{i}\right)\right)^{-1}f_{i}$ and $\mathcal{Y} : \Omega \to \mathcal{H}$ by $\mathcal{Y}(\omega) =\left(\mu\left(\Omega_{i}\right)\right)^{-1}g_{i}$ and $K: \mathcal{H} \to \mathcal{H}$ by $$Ke_{1}=e_{1},\;Ke_{2}=e_{1},\;Ke_{3}=e_{1},\;Ke_{4}=e_{2},\;Ke_{5}=e_{3},\cdots.$$ Then for $f \in \mathcal{H}$, 
\begin{align*}
\int_{\Omega}\langle  f, \mathcal{X}(\omega)\rangle \langle  \mathcal{X}(\omega),f \rangle d\mu &=\sum_{i = 1}^{\infty}\int_{\Omega_{i}}\langle f, f_{i}\rangle \langle f_{i},f \rangle d\mu\\
& =2 \langle f, e_{1}\rangle \langle f, e_{1}\rangle + \sum_{i = 1}^{\infty} \langle f, e_{i} \rangle \langle e_{i}, f \rangle\\
&=2\langle f, e_{1}\rangle \langle f, e_{1}\rangle + \Vert f \Vert^{2}. \\
& \leq 3 \Vert f \Vert^{2}
\end{align*} 
So $$ \Vert K^{\ast} f \Vert^{2} \leq \int_{\Omega}\langle  f, \mathcal{X}(\omega)\rangle \langle  \mathcal{X}(\omega),f \rangle d\mu \leq 3 \Vert f \Vert^{2} $$
Therefore, $\mathcal{X}$ is a continuous $K$-frame for $\mathcal{H}$ with bounds $1$ and $3$. Similarly, we have 
\begin{align*}
\int_{\Omega}\langle  f, \mathcal{Y}(\omega)\rangle \langle  \mathcal{Y}(\omega),f \rangle d\mu &=\sum_{i = 1}^{\infty}\int_{\Omega_{i}}\langle f, g_{i}\rangle \langle g_{i},f \rangle d\mu\\
& =\langle f, e_{1}\rangle \langle f, e_{1}\rangle + \sum_{i = 1}^{\infty} \langle f, e_{i} \rangle \langle e_{i}, f \rangle\\
&=\langle f, e_{1}\rangle \langle f, e_{1}\rangle + \Vert f \Vert^{2}. \\
& \leq 2 \Vert f \Vert^{2}
\end{align*} 
So $$ \Vert K^{\ast} f \Vert^{2} \leq \int_{\Omega}\langle  f, \mathcal{Y}(\omega)\rangle \langle  \mathcal{Y}(\omega),f \rangle d\mu \leq 2 \Vert f \Vert^{2} $$
Therefore $\mathcal{Y}$ is a continuous $K$-frame for $\mathcal{H}$ with bounds $1$ and $2$.

Now, for $f \in \mathcal{H}$, we have
\begin{align*}
&\int_{\Omega}\langle  f, \mathcal{X}(\omega)\rangle \langle  \mathcal{Y}(\omega), f\rangle d\mu =\sum_{i = 1}^{\infty}\int_{\Omega_{i}}\left <f, f_{i}\right >\left <g_{i}, f\right >d\mu\\
&=\left <f, e_{1}\right >\left <e_{1}, f\right > + \left <f, e_{1}\right >\left <e_{1}, f\right > + \left <f, e_{2}\right >\left <e_{2}, f\right >+\cdots\\
&=\left <f, e_{1}\right >\left <e_{1}, f\right > + \sum_{i = 1}^{\infty} \langle f, e_{i} \rangle \langle e_{i}, f \rangle\\
&=\left <f, e_{1}\right >\left <e_{1}, f\right > + \Vert f \Vert^{2}\\
&\leq 2 \Vert f \Vert^{2}.
\end{align*} 
So $$ \Vert K^{\ast} f \Vert^{2} \leq \int_{\Omega}\langle  f, \mathcal{X}(\omega)\rangle \langle  \mathcal{Y}(\omega),f \rangle d\mu \leq 2 \Vert f \Vert^{2} $$
Thus, $(\mathcal{X}, \mathcal{Y})$ is a continuous $K$-biframe for $\mathcal{H}$ with bounds $1$ and $2$.
\end{example}
\begin{example}  Assume that $\mathcal{S}=\left\{\left(\begin{array}{ll}a & 0 \\ 0 & b\end{array}\right): a, b \in \mathbb{R}\right\}$,with the inner product:
$$
\begin{aligned}
\langle., .\rangle: \mathcal{S} \times \mathcal{S} & \rightarrow \mathbb{R} \\
(M, N) & \longmapsto MN^t .
\end{aligned}
$$
for all $M, N \in\mathcal{S}$. It is then straightforward to verify that $\langle\cdot, \cdot\rangle$ forms a real inner product on $\mathcal{S}$. Now, let's consider a measure space $(\Omega=[0,1], \mu)$ where $\mu$ is the Lebesgue measure. Define $\mathcal{X} : \Omega \to \mathcal{S}$ by
\[
\mathcal{X}(\omega) = 
\begin{pmatrix}
\;2 \omega & 0\\
\;0           &   1-\omega\\
\end{pmatrix}
,\; \omega \in \Omega
\] 
and $\mathcal{Y} : \Omega \to \mathcal{S}$ by
\[
\mathcal{Y}(\omega) = 
\begin{pmatrix}
\;3\omega & 0\\
\;0           & \omega+1\\
\end{pmatrix}
,\; \omega \in \Omega
\]
For all $M \in \mathcal{S}$, it's straightforward to verify that the functions $\omega \mapsto\langle M, \mathcal{X}(\omega)\rangle$ and $\omega \mapsto\langle M, \mathcal{Y}(\omega)\rangle$ are measurable on $\Omega$. Let's define $$K: \mathcal{S} \rightarrow \mathcal{S} \quad\text{by}  \quad K M=\sqrt{2} M \quad\text{for all}  \quad M \in \mathcal{S}.$$ Then, it's easy to verify that $\Vert K^* M\Vert^2=2\Vert M\Vert^2$.

For every $M_f=\left(\begin{array}{ll}a & 0 \\ 0 & b\end{array}\right) \in \mathcal{S}$, we have
$$
\begin{aligned}
\int_{\Omega}\langle M_f, \mathcal{X}(\omega)\rangle\langle \mathcal{Y}(\omega), M_f\rangle d \mu(\omega) & =\int_{[0,1]}\left\langle\left(\begin{array}{cc}
a & 0 \\
0 & b
\end{array}\right),\left(\begin{array}{cc}
2 \omega & 0 \\
0 & 1-\omega
\end{array}\right)\right\rangle\\ & \quad\quad\quad\quad\quad\left\langle\left(\begin{array}{cc}
3 \omega & 0 \\
0 & \omega+1
\end{array}\right),\left(\begin{array}{ll}
a & 0 \\
0 & b
\end{array}\right)\right\rangle d \mu(\omega) \\
& =\int_{[0,1]}\left(\begin{array}{cc}
2 \omega a & 0 \\
0 & (1-\omega) b
\end{array}\right)\left(\begin{array}{cc}
3 \omega a & 0 \\
0 & (\omega+1) b
\end{array}\right) d \mu(\omega) \\
& =\int_{[0,1]}\left(\begin{array}{cc}
6 \omega^2 & 0 \\
0 & 1-\omega^2
\end{array}\right)\left(\begin{array}{cc}
a^2 & 0 \\
0 & b^2
\end{array}\right) d \mu(\omega) \\
& =\left(\begin{array}{cc}
a^2 & 0 \\
0 & b^2
\end{array}\right) \int_{[0,1]}\left(\begin{array}{cc}
6 \omega^2 & 0 \\
0 & 1-\omega^2
\end{array}\right) d \mu(\omega) \\
& =\left(\begin{array}{cc}
2 & 0 \\
0 & \frac{2}{3}
\end{array}\right)\left(\begin{array}{cc}
a^2 & 0 \\
0 & b^2
\end{array}\right),
\end{aligned}
$$
consequently
 $$ \dfrac{1}{3}\Vert K^{\ast}f \Vert^{2} \leq \int_{\Omega}\langle  f, \mathcal{X}(\omega)\rangle \langle  \mathcal{Y}(\omega),f \rangle d\mu \leq 2 \Vert f \Vert^{2} $$
Therefore, $(\mathcal{X}, \mathcal{Y})$ is a continuous $K$-biframe for $\mathcal{H}$ with bound $\dfrac{1}{3}$ and $2$. 
\end{example}
Next, we introduce the continuous biframe operator and provide some of its properties.
\begin{definition}
Let $(\mathcal{X}, \mathcal{Y}) = \left(\mathcal{X} : \Omega \to \mathcal{H},\; \mathcal{Y} : \Omega \to \mathcal{H}\right)$ be a continuous biframe for $\mathcal{H}$ with respect to $\left(\Omega, \mu\right)$. Then the continuous biframe operator $S_{\mathcal{X}, \mathcal{Y}} : \mathcal{H} \to \mathcal{H}$ is defined by
\[S_{\mathcal{X}, \mathcal{Y}}f = \int_{\Omega}\langle  f, \mathcal{X}(\omega)\rangle \mathcal{Y}(\omega)d\mu,\]
for all $f \in \mathcal{H}$.
\end{definition}

For every $f \in \mathcal{H}$, we have 
\begin{align}
\langle  S_{\mathcal{X}, \mathcal{Y}}f, f\rangle  = \int_{\Omega}\langle  f, \mathcal{X}(\omega)\rangle \langle  \mathcal{Y}(\omega), f\rangle d\mu.\label{3.eqq3.12}
\end{align}
This implies that, for each $f \in \mathcal{H}$,
\[A\Vert f \Vert^{2} \leq \langle  S_{\mathcal{X}, \mathcal{Y}}f, f\rangle  \leq  B\Vert f \Vert^{2}.\]
Hence $AI \leq S_{\mathcal{X}, \mathcal{Y}} \leq BI$, where $I$ is the identity operator on $\mathcal{H}$. consequently, $S_{\mathcal{X}, \mathcal{Y}}$ is positive and invertible.

\begin{proposition}
Let $S_{\mathcal{X}, \mathcal{Y}}$ and $S_{\mathcal{Y}, \mathcal{X}}$ be continuous biframe operators such that $S_{\mathcal{X}, \mathcal{Y}}=S_{\mathcal{Y}, \mathcal{X}}$. Then the pair $(\mathcal{X}, \mathcal{Y})$ is a continuous $K$-biframe for $\mathcal{H}$ with respect to $\left(\Omega, \mu\right)$ if and only if  $(\mathcal{Y}, \mathcal{X})$ is a continuous $K$-biframe for $\mathcal{H}$ with respect to $\left(\Omega, \mu\right)$
\end{proposition}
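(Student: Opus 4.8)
The plan is to observe that, for either ordering of the pair, the defining double inequality of a continuous $K$-biframe depends on the two mappings only through the quadratic form of the associated continuous biframe operator. Indeed, by the identity (\ref{3.eqq3.12}), for every $f \in \mathcal{H}$ the middle term of (\ref{3.eqq3.11}) equals $\langle S_{\mathcal{X}, \mathcal{Y}}f, f\rangle$, so that $(\mathcal{X}, \mathcal{Y})$ is a continuous $K$-biframe with bounds $A, B$ if and only if both mappings are weakly measurable and
\[A\Vert K^{\ast} f \Vert^{2} \leq \langle S_{\mathcal{X}, \mathcal{Y}}f, f\rangle \leq B\Vert f \Vert^{2}, \qquad f \in \mathcal{H}.\]
The same computation applied to the reversed pair gives $\langle S_{\mathcal{Y}, \mathcal{X}}f, f\rangle = \int_{\Omega}\langle f, \mathcal{Y}(\omega)\rangle \langle \mathcal{X}(\omega), f\rangle d\mu$, which is precisely the integral controlling $(\mathcal{Y}, \mathcal{X})$.

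First I would dispose of the measurability hypothesis: condition $(i)$ of Definition \ref{def1.01} requires that both $\omega \mapsto \langle f, \mathcal{X}(\omega)\rangle$ and $\omega \mapsto \langle f, \mathcal{Y}(\omega)\rangle$ be measurable, and this requirement is manifestly symmetric in $\mathcal{X}$ and $\mathcal{Y}$; hence it holds for $(\mathcal{X}, \mathcal{Y})$ exactly when it holds for $(\mathcal{Y}, \mathcal{X})$. The substance of the argument is then the hypothesis $S_{\mathcal{X}, \mathcal{Y}} = S_{\mathcal{Y}, \mathcal{X}}$, which yields $\langle S_{\mathcal{X}, \mathcal{Y}}f, f\rangle = \langle S_{\mathcal{Y}, \mathcal{X}}f, f\rangle$ for all $f \in \mathcal{H}$. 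Consequently the two middle terms of the respective defining inequalities coincide pointwise in $f$, so a pair of bounds $A, B$ works for $(\mathcal{X}, \mathcal{Y})$ if and only if the very same bounds work for $(\mathcal{Y}, \mathcal{X})$. Running this equivalence in one direction proves the forward implication and, by symmetry of the hypothesis, the reverse implication as well.

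I do not anticipate a genuine obstacle here: once the defining inequality is rewritten through (\ref{3.eqq3.12}), the equality of the two biframe operators transfers the bounds verbatim. The only point that warrants care is to record explicitly that the operator identity $S_{\mathcal{X}, \mathcal{Y}} = S_{\mathcal{Y}, \mathcal{X}}$ forces equality of the associated quadratic forms (which is immediate) and that no change of constants is required, so that the equivalence even preserves the continuous $K$-biframe bounds $A$ and $B$.
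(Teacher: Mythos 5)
Your proposal is correct and follows essentially the same route as the paper: both arguments identify the middle term of the defining inequality with the quadratic form $\langle S_{\mathcal{X},\mathcal{Y}}f,f\rangle$ via (\ref{3.eqq3.12}), invoke the hypothesis $S_{\mathcal{X},\mathcal{Y}}=S_{\mathcal{Y},\mathcal{X}}$ to transfer the same bounds $A,B$ to the reversed pair, and obtain the converse by symmetry. Your explicit remark that the weak-measurability condition is symmetric in $\mathcal{X}$ and $\mathcal{Y}$ is a small point the paper leaves implicit, but it does not change the argument.
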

\begin{proof}
Let $(\mathcal{X}, \mathcal{Y})$ is a continuous $K$-biframe for $\mathcal{H}$ with bounds $A$ and $B$. Then for every $f \in \mathcal{H}$, we have 
\[A\Vert K^{\ast} f \Vert^{2} \leq\langle S_{\mathcal{X}, \mathcal{Y}}f,f\rangle= \int_{\Omega}\langle  f, \mathcal{X}(\omega)\rangle \langle  \mathcal{Y}(\omega), f\rangle d\mu \leq B\Vert f \Vert^{2}.\] 
Since $S_{\mathcal{X}, \mathcal{Y}}=S_{\mathcal{Y}, \mathcal{X}}$ we have 
$$\langle S_{\mathcal{Y}, \mathcal{X}}f,f\rangle=\langle S_{\mathcal{X}, \mathcal{Y}}f,f\rangle=\int_{\Omega}\langle  f, \mathcal{Y}(\omega)\rangle \langle  \mathcal{X}(\omega), f\rangle d\mu$$
Thus, for each $f \in \mathcal{H}$, we have 
\[A\Vert K^{\ast}f \Vert^{2} \leq \int_{\Omega}\langle  f, \mathcal{Y}(\omega)\rangle \langle  \mathcal{X}(\omega), f\rangle d\mu \leq B\Vert f \Vert^{2}.\]
Therefore, $(\mathcal{Y}, \mathcal{X})$ is a continuous $K$-biframe for $\mathcal{H}$.

Likewise, we can establish the converse part of this theorem.
\end{proof}
In the following theorem, we establish a characterization of a continuous $K$-biframe by utilizing its continuous biframe operator.

\begin{theorem}
Let $(\mathcal{X}, \mathcal{Y})$ be a continuous biframe for $\mathcal{H}$ with respect to $\left(\Omega, \mu\right)$.Then $(\mathcal{X}, \mathcal{Y})$ is a continuous $K$-biframe  with bounds $A$ and $B$ for $\mathcal{H}$ if and only if $S_{\mathcal{X}, \mathcal{Y}} \geq AKK^{\ast}$, where $S_{\mathcal{X}, \mathcal{Y}}$ is the continuous biframe operator for $(\mathcal{X}, \mathcal{Y})$.
\end{theorem}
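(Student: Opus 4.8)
The plan is to collapse both directions of the equivalence onto the single identity $\|K^{\ast}f\|^{2} = \langle KK^{\ast}f, f\rangle$, which turns the defining lower bound of a continuous $K$-biframe into a quadratic-form comparison with the biframe operator. The starting point is equation (\ref{3.eqq3.12}), which already identifies $\langle S_{\mathcal{X}, \mathcal{Y}}f, f\rangle$ with the integral $\int_{\Omega}\langle f, \mathcal{X}(\omega)\rangle\langle \mathcal{Y}(\omega), f\rangle\, d\mu$ that occupies the middle of (\ref{3.eqq3.11}). Since by hypothesis $(\mathcal{X}, \mathcal{Y})$ is already a continuous biframe, the upper estimate $\langle S_{\mathcal{X}, \mathcal{Y}}f, f\rangle \leq B\|f\|^{2}$ is available throughout, so only the lower inequality needs to be matched against the operator inequality.

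For the forward implication I would assume $(\mathcal{X}, \mathcal{Y})$ is a continuous $K$-biframe with bounds $A$ and $B$ and write, for an arbitrary $f \in \mathcal{H}$,
\[
A\langle KK^{\ast}f, f\rangle = A\|K^{\ast}f\|^{2} \leq \int_{\Omega}\langle f, \mathcal{X}(\omega)\rangle\langle \mathcal{Y}(\omega), f\rangle\, d\mu = \langle S_{\mathcal{X}, \mathcal{Y}}f, f\rangle .
\]
As this holds for every $f$, it is precisely the assertion $AKK^{\ast} \leq S_{\mathcal{X}, \mathcal{Y}}$, that is, $S_{\mathcal{X}, \mathcal{Y}} \geq AKK^{\ast}$.

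For the converse I would reverse these steps: assuming $S_{\mathcal{X}, \mathcal{Y}} \geq AKK^{\ast}$ yields $\langle S_{\mathcal{X}, \mathcal{Y}}f, f\rangle \geq A\langle KK^{\ast}f, f\rangle = A\|K^{\ast}f\|^{2}$, and combining this with (\ref{3.eqq3.12}) together with the Bessel bound $B$ inherited from the biframe hypothesis recovers both inequalities of (\ref{3.eqq3.11}), so $(\mathcal{X}, \mathcal{Y})$ is a continuous $K$-biframe with bounds $A$ and $B$.

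The computations are immediate; the only point demanding care is the reading of the operator inequality $S_{\mathcal{X}, \mathcal{Y}} \geq AKK^{\ast}$ as the quadratic-form comparison $\langle S_{\mathcal{X}, \mathcal{Y}}f, f\rangle \geq A\langle KK^{\ast}f, f\rangle$ valid for all $f$, and the attendant observation that the middle term of (\ref{3.eqq3.11}) coincides with $\langle S_{\mathcal{X}, \mathcal{Y}}f, f\rangle$, so that the comparison is between real quantities and is genuinely meaningful. Once that identification, already recorded in (\ref{3.eqq3.12}), is in place, no real obstacle remains: the statement is simply a reformulation of the lower frame bound in the language of positive operators, with the upper bound supplied automatically by the biframe assumption.
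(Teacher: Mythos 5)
Your proposal is correct and follows essentially the same route as the paper's own proof: both directions hinge on the identity $\|K^{\ast}f\|^{2}=\langle KK^{\ast}f,f\rangle$ together with the identification of the integral with $\langle S_{\mathcal{X},\mathcal{Y}}f,f\rangle$ from (\ref{3.eqq3.12}), and the upper bound is inherited from the biframe hypothesis. If anything, you are slightly more explicit than the paper about where the Bessel bound $B$ comes from in the converse direction.
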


\begin{proof}
Let $(\mathcal{X}, \mathcal{Y})$ is a continuous $K$-biframe for $\mathcal{H}$ with bounds $A$ and $B$. Then using (\ref{3.eqq3.11}) and (\ref{3.eqq3.12}), for each $f \in \mathcal{H}$, we get 
\[A\Vert K^{\ast}f \Vert^{2} \leq \langle  S_{\mathcal{X}, \mathcal{Y}}f, f\rangle  = \int_{\Omega}\langle  f, \mathcal{X}(\omega)\rangle \langle  \mathcal{Y}(\omega), f\rangle d\mu \leq  B\Vert f \Vert^{2}.\]
Thus
\[A\langle  K^{\ast}f, K^{\ast}f\rangle  \leq \langle  S_{\mathcal{X}, \mathcal{Y}}f, f\rangle  ,\]
Hence $$S_{\mathcal{X}, \mathcal{Y}} \geq A KK^{\ast}.$$

Conversely, assume that $S_{\mathcal{X}, \mathcal{Y}} \geq A KK^{\ast}$. Then, for every $f \in \mathcal{H}$, we have
\[A\Vert K^{\ast}f \Vert^{2} \leq  \langle  S_{\mathcal{X}, \mathcal{Y}}f, f\rangle  = \int_{\Omega}\langle  f, \mathcal{X}(\omega)\rangle \langle  \mathcal{Y}(\omega), f\rangle d\mu.\] 
Since $(\mathcal{X}, \mathcal{Y})$ is a continuous biframe for $\mathcal{H}$. Therefore, $(\mathcal{X}, \mathcal{Y})$ is a continuous $K$-biframe for $\mathcal{H}$.
\end{proof}

Furthermore, we provide a characterization of a continuous biframe with the assistance of an invertible operator on $\mathcal{H}$.
\begin{theorem}\label{3.thm3.39}
Let $\mathcal{T}\in \mathcal{B}(\mathcal{H})$ be invertible on $\mathcal{H}$. Then the
following statements are equivalent:
\begin{enumerate}
\item $(\mathcal{X}, \mathcal{Y})$ is a continuous $K$-biframe for $\mathcal{H}$ with respect to $\left(\Omega, \mu\right)$
\item $(\mathcal{T}\mathcal{X}, \mathcal{T}\mathcal{Y})$ is a continuous $K$-biframe for $\mathcal{H}$ with respect to $\left(\Omega, \mu\right)$.
\end{enumerate}
\end{theorem}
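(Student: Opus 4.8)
The plan is to push the whole statement through the continuous biframe operator and to let the Douglas factorization theorem (Theorem~\ref{Doglas th}) handle the lower bound. Since $\mathcal{T}$ is invertible, $\mathcal{T}^{-1}\in\mathcal{B}(\mathcal{H})$ and $\mathcal{T}^{-1}(\mathcal{T}\mathcal{X})=\mathcal{X}$, $\mathcal{T}^{-1}(\mathcal{T}\mathcal{Y})=\mathcal{Y}$; thus if I establish the implication $(1)\Rightarrow(2)$ for an \emph{arbitrary} invertible operator, then $(2)\Rightarrow(1)$ is obtained by applying that same implication to $\mathcal{T}^{-1}$ and the pair $(\mathcal{T}\mathcal{X},\mathcal{T}\mathcal{Y})$. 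So I would only prove $(1)\Rightarrow(2)$. The routine first step is measurability: $\omega\mapsto\langle f,\mathcal{T}\mathcal{X}(\omega)\rangle=\langle \mathcal{T}^{\ast}f,\mathcal{X}(\omega)\rangle$ and $\omega\mapsto\langle f,\mathcal{T}\mathcal{Y}(\omega)\rangle=\langle\mathcal{T}^{\ast}f,\mathcal{Y}(\omega)\rangle$ are measurable because $\mathcal{X},\mathcal{Y}$ are weakly measurable and $\mathcal{T}^{\ast}$ is bounded.

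The key computation is the substitution $f\mapsto\mathcal{T}^{\ast}f$ under the integral,
\begin{align*}
\int_{\Omega}\langle f,\mathcal{T}\mathcal{X}(\omega)\rangle\langle\mathcal{T}\mathcal{Y}(\omega),f\rangle\,d\mu=\int_{\Omega}\langle \mathcal{T}^{\ast}f,\mathcal{X}(\omega)\rangle\langle\mathcal{Y}(\omega),\mathcal{T}^{\ast}f\rangle\,d\mu,
\end{align*}
which at the operator level reads $S_{\mathcal{T}\mathcal{X},\mathcal{T}\mathcal{Y}}=\mathcal{T}\,S_{\mathcal{X},\mathcal{Y}}\,\mathcal{T}^{\ast}$. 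From here the upper bound is immediate: using $S_{\mathcal{X},\mathcal{Y}}\leq BI$,
\begin{align*}
\langle S_{\mathcal{T}\mathcal{X},\mathcal{T}\mathcal{Y}}f,f\rangle=\langle S_{\mathcal{X},\mathcal{Y}}\mathcal{T}^{\ast}f,\mathcal{T}^{\ast}f\rangle\leq B\Vert \mathcal{T}^{\ast}f\Vert^{2}\leq B\Vert\mathcal{T}\Vert^{2}\Vert f\Vert^{2},
\end{align*}
so $B\Vert\mathcal{T}\Vert^{2}$ is an admissible upper $K$-biframe bound.

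For the lower bound I would feed the vector $\mathcal{T}^{\ast}f$ into the $K$-biframe inequality for $(\mathcal{X},\mathcal{Y})$, obtaining
\begin{align*}
\langle S_{\mathcal{T}\mathcal{X},\mathcal{T}\mathcal{Y}}f,f\rangle=\langle S_{\mathcal{X},\mathcal{Y}}\mathcal{T}^{\ast}f,\mathcal{T}^{\ast}f\rangle\geq A\Vert K^{\ast}\mathcal{T}^{\ast}f\Vert^{2}=A\langle(\mathcal{T}K)(\mathcal{T}K)^{\ast}f,f\rangle,
\end{align*}
that is, $S_{\mathcal{T}\mathcal{X},\mathcal{T}\mathcal{Y}}\geq A(\mathcal{T}K)(\mathcal{T}K)^{\ast}$. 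This already exhibits the transformed pair as a continuous $\mathcal{T}K$-biframe; to convert it into a genuine $K$-biframe bound I would apply Theorem~\ref{Doglas th} with $\mathcal{T}_{1}=K$ and $\mathcal{T}_{2}=\mathcal{T}K$. Provided $\mathcal{R}(K)\subseteq\mathcal{R}(\mathcal{T}K)$, Douglas furnishes $\lambda\geq0$ with $KK^{\ast}\leq\lambda^{2}(\mathcal{T}K)(\mathcal{T}K)^{\ast}$, and chaining the two inequalities gives $S_{\mathcal{T}\mathcal{X},\mathcal{T}\mathcal{Y}}\geq (A/\lambda^{2})KK^{\ast}$, so that $A/\lambda^{2}$ serves as a lower $K$-biframe bound.

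The main obstacle is precisely this range inclusion. Invertibility of $\mathcal{T}$ yields $\mathcal{R}(\mathcal{T}K)=\mathcal{T}\bigl(\mathcal{R}(K)\bigr)$, so $\mathcal{R}(K)\subseteq\mathcal{R}(\mathcal{T}K)$ is equivalent to the invariance $\mathcal{T}^{-1}\bigl(\mathcal{R}(K)\bigr)\subseteq\mathcal{R}(K)$; this is the one place where the hypothesis on $\mathcal{T}$ has to be exploited decisively, and I expect the delicate work to sit here rather than in the operator identity or the upper estimate. Since the implication is being proved for a generic invertible operator, the converse is then nothing but the same argument applied to $\mathcal{T}^{-1}$ (its lower bound requiring the analogous inclusion $\mathcal{R}(K)\subseteq\mathcal{R}(\mathcal{T}^{-1}K)$), which closes the equivalence.
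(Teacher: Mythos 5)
Your skeleton is sound up to the point you yourself flagged: the reduction of $(2)\Rightarrow(1)$ to $(1)\Rightarrow(2)$ via $\mathcal{T}^{-1}$, the measurability step, the identity $S_{\mathcal{T}\mathcal{X},\mathcal{T}\mathcal{Y}}=\mathcal{T}S_{\mathcal{X},\mathcal{Y}}\mathcal{T}^{\ast}$, the upper bound $B\Vert\mathcal{T}\Vert^{2}$, and the estimate $S_{\mathcal{T}\mathcal{X},\mathcal{T}\mathcal{Y}}\geq A(\mathcal{T}K)(\mathcal{T}K)^{\ast}$ are all correct. But the range inclusion $\mathcal{R}(K)\subseteq\mathcal{R}(\mathcal{T}K)$ that you leave as ``the delicate work'' is a genuine gap, and it cannot be closed from the stated hypotheses: invertibility gives $\mathcal{R}(\mathcal{T}K)=\mathcal{T}\bigl(\mathcal{R}(K)\bigr)$, and nothing forces $\mathcal{T}$ to map $\mathcal{R}(K)$ over $\mathcal{R}(K)$. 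Concretely, take $\mathcal{H}=\mathbb{C}^{2}$ with orthonormal basis $\{e_{1},e_{2}\}$, let $\Omega$ be a single point of measure one, $\mathcal{X}(\omega)=\mathcal{Y}(\omega)=e_{1}$, let $K$ be the orthogonal projection onto $\mathrm{span}\{e_{1}\}$, and let $\mathcal{T}$ be the unitary swapping $e_{1}$ and $e_{2}$. Then $\int_{\Omega}\langle f,\mathcal{X}(\omega)\rangle\langle\mathcal{Y}(\omega),f\rangle\,d\mu=\vert\langle f,e_{1}\rangle\vert^{2}=\Vert K^{\ast}f\Vert^{2}$, so $(\mathcal{X},\mathcal{Y})$ is a Parseval continuous $K$-biframe, whereas $\int_{\Omega}\langle f,\mathcal{T}\mathcal{X}(\omega)\rangle\langle\mathcal{T}\mathcal{Y}(\omega),f\rangle\,d\mu=\vert\langle f,e_{2}\rangle\vert^{2}$ vanishes at $f=e_{1}$ while $\Vert K^{\ast}e_{1}\Vert=1$; hence no lower bound of the form $A'\Vert K^{\ast}f\Vert^{2}$ exists and $(\mathcal{T}\mathcal{X},\mathcal{T}\mathcal{Y})$ is not a continuous $K$-biframe. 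Here $\mathcal{R}(K)=\mathrm{span}\{e_{1}\}$ and $\mathcal{R}(\mathcal{T}K)=\mathrm{span}\{e_{2}\}$, so your Douglas criterion fails exactly as you feared: the theorem as stated is false, and the missing inclusion is not a technicality but the precise obstruction.

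You should also know that the paper's own proof does not surmount this obstruction; it commits the very error your operator-level bookkeeping avoided. After substituting $\mathcal{T}^{\ast}f$, the definition yields $\int_{\Omega}\langle\mathcal{T}^{\ast}f,\mathcal{X}(\omega)\rangle\langle\mathcal{Y}(\omega),\mathcal{T}^{\ast}f\rangle\,d\mu\geq A\Vert K^{\ast}\mathcal{T}^{\ast}f\Vert^{2}$, but the paper writes $\geq A\Vert\mathcal{T}^{\ast}K^{\ast}f\Vert^{2}$, silently interchanging $K^{\ast}$ and $\mathcal{T}^{\ast}$ (and it performs the same swap with $\bigl(\mathcal{T}^{-1}\bigr)^{\ast}$ in the converse direction). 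That interchange is legitimate precisely when $\mathcal{T}K=K\mathcal{T}$, a hypothesis the paper does impose in two of its later theorems but omits here. Under that added hypothesis your route closes cleanly and is, if anything, tidier than the paper's norm manipulations: $K=\mathcal{T}K\mathcal{T}^{-1}$ gives $\mathcal{R}(K)\subseteq\mathcal{R}(\mathcal{T}K)$, Theorem~\ref{Doglas th} supplies $\lambda>0$ with $KK^{\ast}\leq\lambda^{2}(\mathcal{T}K)(\mathcal{T}K)^{\ast}$, and chaining yields $S_{\mathcal{T}\mathcal{X},\mathcal{T}\mathcal{Y}}\geq(A/\lambda^{2})KK^{\ast}$. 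So the correct verdict is: your diagnosis of where the difficulty sits is sharper than the paper's treatment, but as a proof of Theorem~\ref{3.thm3.39} as stated your argument is incomplete --- and necessarily so, since without a commutation-type assumption the statement itself fails.
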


\begin{proof}
(1)$\Rightarrow$(2) For each $f \in \mathcal{H}$, $$ \omega \mapsto\langle  f, \mathcal{T}\mathcal{X}(\omega)\rangle   $$ and $$\omega \mapsto \langle  f, \mathcal{T}\mathcal{Y}(\omega)\rangle  $$ are measurable functions on $\Omega$. Let $(\mathcal{X}, \mathcal{Y})$ is a continuous biframe for $\mathcal{H}$ with bounds $A$ and $B$ and $\mathcal{T}\in \mathcal{B}(\mathcal{H})$. for $f \in \mathcal{H}$, we have  
\begin{align*}
\int_{\Omega}\langle  f, \mathcal{T}\mathcal{X}(\omega)\rangle \langle  \mathcal{T}\mathcal{Y}(\omega), f\rangle d\mu &= \int_{\Omega}\langle  \mathcal{T}^{\ast}f, \mathcal{X}(\omega)\rangle \langle  \mathcal{Y}(\omega), \mathcal{T}^{\ast}f\rangle d\mu\\
&\leq B\Vert  \mathcal{T}^{\ast}f\Vert^{2} \\
&\leq B \Vert \mathcal{T}\Vert^{2} \Vert f\Vert^{2}.
\end{align*}

On the other hand, Since $\mathcal{T}\in \mathcal{B}(\mathcal{H})$ is invertible, for each $f \in \mathcal{H}$, we have
\begin{align*}
\Vert K^{\ast} f \Vert^{2} &=\Vert \left (\mathcal{T}\mathcal{T}^{- 1}\right )^{\ast} K^{\ast}f\Vert^{2} \\
&=\Vert \left (\mathcal{T}^{- 1}\right )^{\ast}\mathcal{T}^{\ast} K^{\ast}f\Vert^{2} \\
&\leq \left\|\left (\mathcal{T}^{- 1}\right )^{\ast}\right\|^{2} \Vert\mathcal{T}^{\ast} K^{\ast}f\Vert^{2}.
\end{align*}
Consequently, for each $f \in \mathcal{H}$, we have 
\begin{align*}
\int_{\Omega}\langle  f, \mathcal{T}\mathcal{X}(\omega)\rangle \langle  \mathcal{T}\mathcal{Y}(\omega), f\rangle d\mu &= \int_{\Omega}\langle  \mathcal{T}^{\ast}f, \mathcal{X}(\omega)\rangle \langle  \mathcal{Y}(\omega), \mathcal{T}^{\ast}f\rangle d\mu\\
&\geq A\Vert\mathcal{T}^{\ast} K^{\ast}f\Vert^{2}\\
&\geq A\left\|\left (\mathcal{T}^{- 1}\right )^{\ast}\right\|^{- 2}\Vert  K^{\ast}f \Vert^{2}.
\end{align*}
Hence, $(\mathcal{T}\mathcal{X}, \mathcal{T}\mathcal{Y})$ is a continuous $K$-biframe for $\mathcal{H}$ with bounds $A\left\|\left (\mathcal{T}^{- 1}\right )^{\ast}\right\|^{- 2}$ and $B\|\mathcal{T}\|^{2}$.

(2)$\Rightarrow$(1), Assume that $(\mathcal{T}\mathcal{X}, \mathcal{T}\mathcal{Y})$ is a continuous $K$-biframe for $\mathcal{H}$ with bounds $A$ and $B$. Now, for each $f \in \mathcal{H}$, we have
\begin{align*}
A\|\mathcal{T}^{\ast}\|^{-2}\Vert  K^{\ast}f \Vert^{2} &= A\|\mathcal{T}^{\ast}\|^{-2}\Vert \left (\mathcal{T}\mathcal{T}^{- 1}\right )^{\ast} K^{\ast}f\Vert^{2}\\
 &\leq A\Vert\left(\mathcal{T}^{- 1}\right)^{\ast}K^{\ast}f\Vert^{2} \\
&\leq \int_{\Omega}\langle  \left(\mathcal{T}^{- 1}\right)^{\ast}f, \mathcal{T}\mathcal{X}(\omega)\rangle \langle  \mathcal{T}\mathcal{Y}(\omega), \left(\mathcal{T}^{- 1}\right)^{\ast}f\rangle d\mu \\
&= \int_{\Omega}\langle  \mathcal{T}^{\ast}\left(\mathcal{T}^{- 1}\right)^{\ast}f, \mathcal{X}(\omega)\rangle \langle  \mathcal{Y}(\omega), \mathcal{T}^{\ast}\left(\mathcal{T}^{- 1}\right)^{\ast}f\rangle d\mu\\
&= \int_{\Omega}\langle  f, \mathcal{X}(\omega)\rangle \langle  \mathcal{Y}(\omega), f\rangle d\mu.   
\end{align*}
On the other hand, for each $f \in \mathcal{H}$, we have
\begin{align*}
&\int_{\Omega}\langle  f, \mathcal{X}(\omega)\rangle \langle  \mathcal{Y}(\omega), f\rangle d\mu\\
&=\int_{\Omega}\langle  \mathcal{T}^{\ast}\left(\mathcal{T}^{- 1}\right)^{\ast}f, \mathcal{X}(\omega)\rangle \langle  \mathcal{Y}(\omega), \mathcal{T}^{\ast}\left(\mathcal{T}^{- 1}\right)^{\ast}f\rangle d\mu\\
&=\int_{\Omega}\langle  \left(\mathcal{T}^{- 1}\right)^{\ast}f, \mathcal{T}\mathcal{X}(\omega)\rangle \langle  \mathcal{T}\mathcal{Y}(\omega), \left(\mathcal{T}^{- 1}\right)^{\ast}f\rangle d\mu\\
&\leq B\Vert\left(\mathcal{T}^{- 1}\right)^{\ast}f\Vert^{2}\\ 
&\leq B\left\|\left(\mathcal{T}^{- 1}\right)^{\ast}\right\|^{2}\Vert f \Vert^{2}.
\end{align*}
Therefore, $(\mathcal{X}, \mathcal{Y})$ is a continuous $K$-biframe for $\mathcal{H}$ with bounds $A\|\mathcal{T}^{\ast}\|^{-2}$ and $B\left\|\left(\mathcal{T}^{- 1}\right)^{\ast}\right\|^{2}$.
\end{proof}
In the following proposition we will require a necessary condition for the operator $\mathcal{T}$ for which $ (\mathcal{X}, \mathcal{Y})$  will be $\mathcal{T}$-biframe for $\mathcal{H}$.
 \begin{proposition}
   Let $(\mathcal{X}, \mathcal{Y})$ be a continuous $K$-biframe for $\mathcal{H}$. Assume that $\mathcal{T} \in \mathcal{B}(\mathcal{H})$ with $\mathcal{R}(\mathcal{T}) \subseteq$ $\mathcal{R}(K)$. Then $(\mathcal{X}, \mathcal{Y})$ is a continuous $\mathcal{T}$-biframe for $\mathcal{H}$.
\end{proposition}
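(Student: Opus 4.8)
The plan is to recognize that a continuous $\mathcal{T}$-biframe differs from a continuous $K$-biframe only in the operator appearing in the lower bound, so the bulk of the hypotheses transfer for free and the entire content lies in producing a lower estimate of the form $A'\|\mathcal{T}^* f\|^2 \leq \int_\Omega \langle f, \mathcal{X}(\omega)\rangle\langle \mathcal{Y}(\omega), f\rangle\, d\mu$. First I would observe that condition $(i)$ of Definition \ref{def1.01} holds automatically, since $\mathcal{X}$ and $\mathcal{Y}$ are literally the same mappings as in the $K$-biframe hypothesis; and the upper bound $\int_\Omega \langle f, \mathcal{X}(\omega)\rangle\langle \mathcal{Y}(\omega), f\rangle\, d\mu \leq B\|f\|^2$ is inherited verbatim, because it involves neither $K$ nor $\mathcal{T}$. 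Thus only the new lower bound requires work.

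The key step is to turn the range inclusion $\mathcal{R}(\mathcal{T}) \subseteq \mathcal{R}(K)$ into a quantitative comparison of $\|\mathcal{T}^* f\|$ and $\|K^* f\|$. For this I would invoke Theorem \ref{Doglas th} with $\mathcal{T}_1 = \mathcal{T}$ and $\mathcal{T}_2 = K$: the equivalence of statements $(1)$ and $(2)$ furnishes a scalar $\lambda \geq 0$ such that $\mathcal{T}\mathcal{T}^* \leq \lambda^2 K K^*$. Pairing this operator inequality with an arbitrary $f \in \mathcal{H}$ yields $\|\mathcal{T}^* f\|^2 = \langle \mathcal{T}\mathcal{T}^* f, f\rangle \leq \lambda^2 \langle KK^* f, f\rangle = \lambda^2 \|K^* f\|^2$, equivalently $\|K^* f\|^2 \geq \lambda^{-2}\|\mathcal{T}^* f\|^2$.

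Finally I would chain this with the $K$-biframe lower bound: for every $f \in \mathcal{H}$,
\[
\frac{A}{\lambda^2}\|\mathcal{T}^* f\|^2 \leq A\|K^* f\|^2 \leq \int_\Omega \langle f, \mathcal{X}(\omega)\rangle\langle \mathcal{Y}(\omega), f\rangle\, d\mu,
\]
which, together with the transferred upper bound, exhibits $(\mathcal{X}, \mathcal{Y})$ as a continuous $\mathcal{T}$-biframe with bounds $A/\lambda^2$ and $B$. The one point demanding a little care is the degenerate case $\lambda = 0$, which forces $\mathcal{T} = 0$ and makes $A/\lambda^2$ meaningless; but then $\mathcal{T}^* f = 0$ for all $f$, so the lower inequality holds trivially for any positive constant, and the conclusion still stands. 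Apart from this bookkeeping, the argument is a direct application of Douglas' theorem, which I expect to be the only nontrivial ingredient.
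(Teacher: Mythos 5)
Your proposal is correct and follows essentially the same route as the paper: apply Douglas' theorem (Theorem \ref{Doglas th}) to the range inclusion $\mathcal{R}(\mathcal{T})\subseteq\mathcal{R}(K)$ to get $\mathcal{T}\mathcal{T}^{\ast}\leq \lambda^{2}KK^{\ast}$, deduce $\|\mathcal{T}^{\ast}f\|^{2}\leq\lambda^{2}\|K^{\ast}f\|^{2}$, and chain this with the $K$-biframe lower bound to obtain the bounds $A/\lambda^{2}$ and $B$. Your extra remark on the degenerate case $\lambda=0$ is a small refinement the paper omits, but it does not change the argument.
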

\begin{proof}
Suppose that $(\mathcal{X}, \mathcal{Y})$ is a continuous $K$-biframe for $\mathcal{H}$. Then there are positive constants $0<A \leq B<\infty$ such that
$$
A\left\|K^* f\right\|^2 \leq  \int_{\Omega}\langle  f, \mathcal{X}(\omega)\rangle \langle  \mathcal{Y}(\omega), f\rangle d\mu \leq B\|x\|^2, \text { for all } f \in \mathcal{H} .
$$

Since $R(\mathcal{T}) \subseteq \mathcal{R}(K)$, by Theorem \ref{Doglas th}, there exists $\alpha>0$ such that $\mathcal{T} \mathcal{T}^{\ast} \leq \alpha^2 K K^*$.

Hence, 
$$
\frac{A}{\alpha^2}\left\|\mathcal{T}^{\ast} f\right\|^2 \leq A\left\|K^* f\right\|^2 \leq  \int_{\Omega}\langle  f, \mathcal{X}(\omega)\rangle \langle  \mathcal{Y}(\omega), f\rangle d\mu \leq B\|x\|^2, \text { for all } f \in \mathcal{H} .
$$

Hence $(\mathcal{X}, \mathcal{Y})$ is a continuous $\mathcal{T}$-biframe for $\mathcal{H}$.
\end{proof}

\begin{theorem}
 Suppose $K \in \mathcal{B}(\mathcal{H})$ has a closed range. The continuous biframe operator of a continuous $K$-biframe is invertible on the subspace $\mathcal{R}(K)$ of $\mathcal{H}$.
\end{theorem}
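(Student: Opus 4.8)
The plan is to interpret the claim in the standard $K$-frame sense: writing $S := S_{\mathcal{X}, \mathcal{Y}}$ for the continuous biframe operator, I will show that the restriction $S|_{\mathcal{R}(K)} : \mathcal{R}(K) \to S(\mathcal{R}(K))$ is a topological isomorphism, i.e.\ $S$ is bounded below on the closed subspace $\mathcal{R}(K)$, which gives injectivity together with closed range and hence a bounded inverse onto the image. First I would record from (\ref{3.eqq3.12}) and (\ref{3.eqq3.11}) that $S$ is a bounded operator satisfying
\[
A\Vert K^{\ast} f\Vert^{2} \leq \langle S f, f\rangle \leq B\Vert f\Vert^{2}, \qquad f \in \mathcal{H},
\]
so that $\langle S f, f\rangle$ is real and nonnegative, making $S$ positive and self-adjoint.

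The heart of the argument is a lower bound for $K^{\ast}$ on $\mathcal{R}(K)$. Since $K$ has closed range, $\mathcal{R}(K)$ is closed and the pseudo-inverse $K^{+}$ of Definition 2.2 exists as a bounded operator with $K K^{+} g = g$ for every $g \in \mathcal{R}(K)$. I would then compute, for $g \in \mathcal{R}(K)$,
\[
\Vert g\Vert^{2} = \langle K K^{+} g, g\rangle = \langle K^{+} g, K^{\ast} g\rangle \leq \Vert K^{+}\Vert\, \Vert g\Vert\, \Vert K^{\ast} g\Vert,
\]
which yields $\Vert K^{\ast} g\Vert \geq \Vert K^{+}\Vert^{-1}\Vert g\Vert$. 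Combining this with the lower $K$-biframe bound and the Cauchy--Schwarz inequality gives, for all $g \in \mathcal{R}(K)$,
\[
\frac{A}{\Vert K^{+}\Vert^{2}}\Vert g\Vert^{2} \leq A\Vert K^{\ast} g\Vert^{2} \leq \langle S g, g\rangle \leq \Vert S g\Vert\, \Vert g\Vert,
\]
and therefore $\Vert S g\Vert \geq A\Vert K^{+}\Vert^{-2}\Vert g\Vert$. This bounds $S$ below on $\mathcal{R}(K)$, so $S|_{\mathcal{R}(K)}$ is injective with closed range, hence invertible onto $S(\mathcal{R}(K))$. (Injectivity alone also follows directly: if $g \in \mathcal{R}(K)$ with $Sg = 0$, then $A\Vert K^{\ast}g\Vert^{2} \leq \langle Sg, g\rangle = 0$ forces $g \in N(K^{\ast}) = \mathcal{R}(K)^{\perp}$, so $g = 0$.)

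The step I expect to be the main obstacle is not any single estimate but pinning down the correct statement of ``invertible on $\mathcal{R}(K)$'': because $S$ need not map $\mathcal{R}(K)$ into itself, the image $S(\mathcal{R}(K))$ is generally a different closed subspace, and the assertion must be read as $S|_{\mathcal{R}(K)}$ being a bounded bijection with bounded inverse onto its range rather than onto $\mathcal{R}(K)$. Once this is made precise, the work reduces to the bounded-below estimate above, whose only nontrivial ingredient is the pseudo-inverse identity $K K^{+}|_{\mathcal{R}(K)} = \mathrm{Id}$ and the boundedness of $K^{+}$ guaranteed by the closed-range hypothesis. Alternatively, the inclusion $\mathcal{R}(K) \subseteq \mathcal{R}(S^{1/2})$ could be extracted from $A K K^{\ast} \leq S$ via Theorem \ref{Doglas th}, but the direct computation above is shorter and self-contained.
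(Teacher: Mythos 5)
Your proposal is correct and follows essentially the same route as the paper: both use the pseudo-inverse identity $KK^{+}|_{\mathcal{R}(K)}=I_{\mathcal{R}(K)}$ to obtain the lower bound $\Vert K^{\ast}g\Vert\geq\Vert K^{+}\Vert^{-1}\Vert g\Vert$ on $\mathcal{R}(K)$, then combine it with the lower biframe bound and Cauchy--Schwarz to conclude that $S_{\mathcal{X},\mathcal{Y}}$ is bounded below on $\mathcal{R}(K)$ and hence invertible onto its image. Your remark that the codomain must be taken as $S(\mathcal{R}(K))$ rather than $\mathcal{R}(K)$ itself is exactly the reading the paper adopts when it writes $S_{\mathcal{X},\mathcal{Y}}|_{\mathcal{R}(K)}:\mathcal{R}(K)\to R(S)$.
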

\begin{proof}
 Assume that $(\mathcal{X}, \mathcal{Y})$ be a continuous $K$-biframe for $\mathcal{H}$.  Then there are positive constants $0<A \leq B<\infty$ such that
$$
A\left\|K^* f\right\|^2 \leq  \int_{\Omega}\langle  f, \mathcal{X}(\omega)\rangle \langle  \mathcal{Y}(\omega), f\rangle d\mu \leq B\|x\|^2, \text { for all } f \in \mathcal{H} .
$$

Since $\mathcal{R}(K)$ is closed, then $K K^{+} x=x$, for all $f \in \mathcal{R}(K)$. That is,
$$
\left.K K^{+}\right|_{\mathcal{R}(K)}=I_{\mathcal{R}(K)},
$$
we have $I_{\mathcal{R}(K)}^{\ast}=\left(\left.K^{+}\right|_{\mathcal{R}(K)}\right)^{\ast} K^*$.
For any $f \in \mathcal{R}(K)$, we obtain
$$
\|x\|^{2}=\left\|\left(\left.K K^{+}\right|_{\mathcal{R}(K)}\right)^{\ast} x \right\|^{2}=\left\|\left(\left.K^{+}\right|_{\mathcal{R}(K)}\right)^{\ast} K^* f\right\| \leq\left\|K^{+}\right\|^{2} \cdot\left\|K^* f\right\|^{2}.
$$
Thus $$\left\|K^* f\right\|^2 \geq\left\|K^{+}\right\|^{-2}\|x\|^2.$$ So, we have
$$
\int_{\Omega}\langle  f, \mathcal{X}(\omega)\rangle \langle  \mathcal{Y}(\omega), f\rangle d\mu \geq A\left\|K^* f\right\|^2 \geq A\left\|K^{+}\right\|^{-2}\|x\|^2, \text { for all } f \in \mathcal{R}(K) .
$$

Therefore, based on the definition of a continuous $K$-biframe, we have
$$
A\left\|K^{+}\right\|^{-2}\|x\|^2 \leq \int_{\Omega}\langle  f, \mathcal{X}(\omega)\rangle \langle  \mathcal{Y}(\omega), f\rangle d\mu \leq B\|x\|^2, \text { for all } f \in \mathcal{R}(K) .
$$

Hence
$$
A\left\|K^{+}\right\|^{-2}\|x\| \leq\|\left.S_{\mathcal{X},\mathcal{Y}}\right|_{\mathcal{R}(K)} x\| \leq B\|x\|, \text { for all } f \in \mathcal{R}(K) .
$$

Consequently, $\left.S_{\mathcal{X},\mathcal{Y}}\right|_{\mathcal{R}(K)}: \mathcal{R}(K) \rightarrow R(S)$ is a bounded linear operator and invertible on $\mathcal{R}(K)$.
\end{proof}
\begin{theorem}
 Let $K \in \mathcal{B}(\mathcal{H})$ be with a dense range. Suppose that $(\mathcal{X}, \mathcal{Y}) = \left(\mathcal{X} : \Omega \to \mathcal{H},\; \mathcal{Y} : \Omega \to \mathcal{H}\right)$ be a continuous $K$-biframe and $\mathcal{T} \in \mathcal{B}(\mathcal{H})$ have a closed range.\\ If $(\mathcal{T} \mathcal{X}, \mathcal{T} \mathcal{Y}) = \left(\mathcal{T} \mathcal{X} : \Omega \to \mathcal{H},\; \mathcal{T} \mathcal{Y} : \Omega \to \mathcal{H}\right)$ is a continuous $K$-biframe for $\mathcal{H}$, then $\mathcal{T}$ is surjective.
\end{theorem}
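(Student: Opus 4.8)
The plan is to deduce surjectivity of $\mathcal{T}$ from the range inclusion $\mathcal{R}(K)\subseteq\mathcal{R}(\mathcal{T})$, which I will extract from the two biframe inequalities via Douglas' factorization theorem, and then close the argument using that $\mathcal{R}(K)$ is dense while $\mathcal{R}(\mathcal{T})$ is closed. Write $A,B$ for the bounds of the continuous $K$-biframe $(\mathcal{X},\mathcal{Y})$ and $A',B'$ for those of $(\mathcal{T}\mathcal{X},\mathcal{T}\mathcal{Y})$.

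First I would rewrite the lower bound of $(\mathcal{T}\mathcal{X},\mathcal{T}\mathcal{Y})$ by pushing $\mathcal{T}$ onto the test vector: using $\langle f,\mathcal{T}\mathcal{X}(\omega)\rangle=\langle \mathcal{T}^{\ast}f,\mathcal{X}(\omega)\rangle$ and $\langle \mathcal{T}\mathcal{Y}(\omega),f\rangle=\langle \mathcal{Y}(\omega),\mathcal{T}^{\ast}f\rangle$, the defining inequality becomes
\[
A'\|K^{\ast}f\|^{2}\leq \int_{\Omega}\langle \mathcal{T}^{\ast}f,\mathcal{X}(\omega)\rangle\langle \mathcal{Y}(\omega),\mathcal{T}^{\ast}f\rangle\, d\mu
\]
for every $f\in\mathcal{H}$. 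The crucial observation is that the integral on the right is precisely the continuous biframe functional of $(\mathcal{X},\mathcal{Y})$ evaluated at the vector $\mathcal{T}^{\ast}f$, so the \emph{upper} bound of $(\mathcal{X},\mathcal{Y})$ (namely $B$, not $B'$) applies and yields
\[
\int_{\Omega}\langle \mathcal{T}^{\ast}f,\mathcal{X}(\omega)\rangle\langle \mathcal{Y}(\omega),\mathcal{T}^{\ast}f\rangle\, d\mu \leq B\,\|\mathcal{T}^{\ast}f\|^{2}.
\]

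Chaining the two estimates gives $A'\|K^{\ast}f\|^{2}\leq B\|\mathcal{T}^{\ast}f\|^{2}$ for all $f$, which in operator form reads $KK^{\ast}\leq \tfrac{B}{A'}\,\mathcal{T}\mathcal{T}^{\ast}$ since both $KK^{\ast}$ and $\mathcal{T}\mathcal{T}^{\ast}$ are positive and self-adjoint. At this point Theorem \ref{Doglas th} applies with $\mathcal{T}_{1}=K$ and $\mathcal{T}_{2}=\mathcal{T}$ and $\lambda^{2}=B/A'$: the inequality $\mathcal{T}_{1}\mathcal{T}_{1}^{\ast}\leq\lambda^{2}\mathcal{T}_{2}\mathcal{T}_{2}^{\ast}$ is equivalent to $\mathcal{R}(K)\subseteq\mathcal{R}(\mathcal{T})$.

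Finally I would finish topologically: since $K$ has dense range, $\overline{\mathcal{R}(K)}=\mathcal{H}$, and since $\mathcal{T}$ has closed range, $\overline{\mathcal{R}(\mathcal{T})}=\mathcal{R}(\mathcal{T})$; combining these with the inclusion just obtained gives $\mathcal{H}=\overline{\mathcal{R}(K)}\subseteq\overline{\mathcal{R}(\mathcal{T})}=\mathcal{R}(\mathcal{T})$, so $\mathcal{T}$ is surjective. I expect the main obstacle to be the bookkeeping in the middle step: one must correctly identify the integral as the $(\mathcal{X},\mathcal{Y})$-biframe functional at the shifted argument $\mathcal{T}^{\ast}f$, so that the right bound $B$ of $(\mathcal{X},\mathcal{Y})$ is the one that enters and the frame inequalities collapse to a clean comparison between $KK^{\ast}$ and $\mathcal{T}\mathcal{T}^{\ast}$. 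Once that operator inequality is in hand, Douglas' theorem together with the density of $\mathcal{R}(K)$ and the closedness of $\mathcal{R}(\mathcal{T})$ — the two hypotheses used exactly here and nowhere else — complete the proof.
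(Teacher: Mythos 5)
Your argument is correct, and the hinge step --- transferring the lower bound of $(\mathcal{T}\mathcal{X},\mathcal{T}\mathcal{Y})$ through the identity $\langle f,\mathcal{T}\mathcal{X}(\omega)\rangle\langle\mathcal{T}\mathcal{Y}(\omega),f\rangle=\langle\mathcal{T}^{\ast}f,\mathcal{X}(\omega)\rangle\langle\mathcal{Y}(\omega),\mathcal{T}^{\ast}f\rangle$ --- is exactly the one the paper uses. Where you diverge is in how you finish. The paper never forms the operator inequality $KK^{\ast}\leq\tfrac{B}{A'}\mathcal{T}\mathcal{T}^{\ast}$ and never invokes Douglas: it simply observes that if $\mathcal{T}^{\ast}f=0$ then the middle integral vanishes, so $A'\|K^{\ast}f\|^{2}\leq 0$ and $K^{\ast}f=0$; this gives $\mathcal{N}(\mathcal{T}^{\ast})\subseteq\mathcal{N}(K^{\ast})=\{0\}$ (injectivity of $K^{\ast}$ coming from the dense range of $K$), whence $\mathcal{R}(\mathcal{T})=\overline{\mathcal{R}(\mathcal{T})}=\mathcal{N}(\mathcal{T}^{\ast})^{\perp}=\mathcal{H}$. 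That route is more economical: it needs only the lower bound of $(\mathcal{T}\mathcal{X},\mathcal{T}\mathcal{Y})$ and does not use the Bessel bound of $(\mathcal{X},\mathcal{Y})$ at all. Your route buys slightly more --- Douglas gives the genuine (un-closured) range inclusion $\mathcal{R}(K)\subseteq\mathcal{R}(\mathcal{T})$ together with a factorization $K=\mathcal{T}U$ --- but for the stated conclusion this extra strength is not needed, since after taking closures your inclusion carries the same information as the paper's null-space inclusion (the two are orthogonal complements of one another). Both proofs use the density of $\mathcal{R}(K)$ and the closedness of $\mathcal{R}(\mathcal{T})$ at precisely the same final moment, and both are valid.
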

\begin{proof}
Suppose That  $(\mathcal{T} \mathcal{X}, \mathcal{T} \mathcal{Y}) = \left(\mathcal{T} \mathcal{X} : \Omega \to \mathcal{H},\; \mathcal{T} \mathcal{Y} : \Omega \to \mathcal{H}\right)$ is a continuous $K$-biframe for $\mathcal{H}$ with frame bounds $A$ and $B$. Then for each $f \in \mathcal{H}$,
$$
A\left\|K^* f\right\|^2 \leq  \int_{\Omega}\langle  f, \mathcal{T}\mathcal{X}(\omega)\rangle \langle \mathcal{T} \mathcal{Y}(\omega), f\rangle d\mu   \leq B\|x\|^2 .
$$
Hence 
\begin{equation}\label{EQ41}
A\left\|K^* f\right\|^2 \leq   \int_{\Omega}\langle  \mathcal{T}^{\ast}f, \mathcal{X}(\omega)\rangle \langle  \mathcal{Y}(\omega), \mathcal{T}^{\ast}f\rangle d\mu  \leq B\|x\|^2 .
\end{equation}
Since $K$ has a dense range, $\mathcal{H}=\overline{\mathcal{R}(K)}$, so $K^*$ is injective. From (\ref{EQ41}), $\mathcal{T}^{\ast}$ is injective since $N\left(\mathcal{T}^{\ast}\right) \subseteq \textbf{N}\left(K^*\right)$. Moreover, $$\mathcal{R}(\mathcal{T})=\mathcal{N}\left(\mathcal{T}^{\ast}\right)^{\perp}=\mathcal{H}.$$ Therefore $\mathcal{T}$ is surjective.
\end{proof}
\begin{theorem}
 Let $K \in \mathcal{B}(\mathcal{H})$ and let  $(\mathcal{X}, \mathcal{Y})$ be a continuous $K$-biframe for $\mathcal{H}$. If $\mathcal{T} \in \mathcal{B}(\mathcal{H})$ has a closed range with $\mathcal{T} K=K \mathcal{T}$,  then $(\mathcal{T} \mathcal{X}, \mathcal{T} \mathcal{Y}) = \left(\mathcal{T} \mathcal{X} : \Omega \to \mathcal{H},\; \mathcal{T} \mathcal{Y} : \Omega \to \mathcal{H}\right)$ is a continuous $K$-biframe for $\mathcal{R}(\mathcal{T})$.
\end{theorem}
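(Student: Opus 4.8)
The plan is to verify directly the three defining conditions of a continuous $K$-biframe for the closed subspace $\mathcal{R}(\mathcal{T})$, regarded as a Hilbert space in its own right. Before that I would record the facts that make the statement meaningful. Since $\mathcal{T}$ has closed range, $\mathcal{R}(\mathcal{T})$ is closed, $\mathcal{H}=\mathcal{R}(\mathcal{T})\oplus\mathcal{N}(\mathcal{T}^{\ast})$, and the pseudo-inverse $\mathcal{T}^{+}$ exists; exactly as in the previous theorem one then gets $\|\mathcal{T}^{\ast}h\|\geq\|\mathcal{T}^{+}\|^{-1}\|h\|$ for every $h\in\mathcal{R}(\mathcal{T})$, i.e.\ $\mathcal{T}^{\ast}$ is bounded below on $\mathcal{R}(\mathcal{T})=\mathcal{N}(\mathcal{T}^{\ast})^{\perp}$. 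Moreover, from $\mathcal{T}K=K\mathcal{T}$ the operator $K$ maps $\mathcal{R}(\mathcal{T})$ into itself (if $h=\mathcal{T}u$ then $Kh=\mathcal{T}(Ku)$), so $K$ restricts to a bounded operator on $\mathcal{R}(\mathcal{T})$ and each $\mathcal{T}\mathcal{X}(\omega),\mathcal{T}\mathcal{Y}(\omega)$ lies in $\mathcal{R}(\mathcal{T})$; hence a $K$-biframe for $\mathcal{R}(\mathcal{T})$ is the right object to aim for.

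For weak measurability and the upper bound I would move $\mathcal{T}$ onto the argument. For $f\in\mathcal{R}(\mathcal{T})$ and $\omega\in\Omega$ we have $\langle f,\mathcal{T}\mathcal{X}(\omega)\rangle=\langle\mathcal{T}^{\ast}f,\mathcal{X}(\omega)\rangle$, which is measurable because $\mathcal{X}$ is weakly measurable, and similarly for $\mathcal{Y}$. Substituting this identity and applying the right-hand (Bessel) bound of the original $K$-biframe to the vector $\mathcal{T}^{\ast}f$ gives
\[
\int_{\Omega}\langle f,\mathcal{T}\mathcal{X}(\omega)\rangle\langle\mathcal{T}\mathcal{Y}(\omega),f\rangle\,d\mu=\int_{\Omega}\langle\mathcal{T}^{\ast}f,\mathcal{X}(\omega)\rangle\langle\mathcal{Y}(\omega),\mathcal{T}^{\ast}f\rangle\,d\mu\leq B\|\mathcal{T}^{\ast}f\|^{2}\leq B\|\mathcal{T}\|^{2}\|f\|^{2},
\]
so the upper bound $B\|\mathcal{T}\|^{2}$ is immediate.

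The substantive part is the lower bound. Applying the left-hand inequality of the original $K$-biframe to $g=\mathcal{T}^{\ast}f$ yields
\[
\int_{\Omega}\langle\mathcal{T}^{\ast}f,\mathcal{X}(\omega)\rangle\langle\mathcal{Y}(\omega),\mathcal{T}^{\ast}f\rangle\,d\mu\geq A\|K^{\ast}\mathcal{T}^{\ast}f\|^{2}.
\]
Taking adjoints in $\mathcal{T}K=K\mathcal{T}$ gives $K^{\ast}\mathcal{T}^{\ast}=\mathcal{T}^{\ast}K^{\ast}$, so $\|K^{\ast}\mathcal{T}^{\ast}f\|=\|\mathcal{T}^{\ast}K^{\ast}f\|$. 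Writing $K^{\ast}f=P_{\mathcal{R}(\mathcal{T})}K^{\ast}f+(I-P_{\mathcal{R}(\mathcal{T})})K^{\ast}f$, the second summand lies in $\mathcal{N}(\mathcal{T}^{\ast})$ and is annihilated by $\mathcal{T}^{\ast}$, while on the first summand the closed-range estimate applies; hence $\|\mathcal{T}^{\ast}K^{\ast}f\|\geq\|\mathcal{T}^{+}\|^{-1}\|P_{\mathcal{R}(\mathcal{T})}K^{\ast}f\|$. Since $P_{\mathcal{R}(\mathcal{T})}K^{\ast}f$ is precisely the value on $f$ of the adjoint, computed inside $\mathcal{R}(\mathcal{T})$, of the restricted operator $K|_{\mathcal{R}(\mathcal{T})}$, chaining the two displays produces a bona fide lower bound $A\|\mathcal{T}^{+}\|^{-2}\|P_{\mathcal{R}(\mathcal{T})}K^{\ast}f\|^{2}$, which is the lower $K$-biframe inequality on $\mathcal{R}(\mathcal{T})$.

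I expect the last step to be the only delicate point. The naive reading ``$K^{\ast}f$'' in the ambient space cannot be used verbatim, because controlling $\|\mathcal{T}^{\ast}K^{\ast}f\|$ from below forces one to discard the $\mathcal{N}(\mathcal{T}^{\ast})$-component of $K^{\ast}f$, and $K^{\ast}\mathcal{R}(\mathcal{T})\subseteq\mathcal{R}(\mathcal{T})$ (equivalently $K$ preserving $\mathcal{N}(\mathcal{T}^{\ast})$) does not follow from $\mathcal{T}K=K\mathcal{T}$ alone. The clean remedy, which I would adopt, is to interpret $K$ throughout as the operator $K|_{\mathcal{R}(\mathcal{T})}$ on the Hilbert space $\mathcal{R}(\mathcal{T})$, whose adjoint there is $P_{\mathcal{R}(\mathcal{T})}K^{\ast}|_{\mathcal{R}(\mathcal{T})}$; with this reading the closed-range bound on $\mathcal{T}^{\ast}$ supplies the constant and no extra hypothesis is required. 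Collecting the three parts shows that $(\mathcal{T}\mathcal{X},\mathcal{T}\mathcal{Y})$ is a continuous $K$-biframe for $\mathcal{R}(\mathcal{T})$ with bounds $A\|\mathcal{T}^{+}\|^{-2}$ and $B\|\mathcal{T}\|^{2}$.
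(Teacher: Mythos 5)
Your argument is correct and follows the same route as the paper's: pass $\mathcal{T}$ across the inner products to reduce to the original biframe inequality at $\mathcal{T}^{\ast}f$, use $K^{\ast}\mathcal{T}^{\ast}=\mathcal{T}^{\ast}K^{\ast}$, and invoke the pseudo-inverse estimate $\|\mathcal{T}^{\ast}h\|\geq\|(\mathcal{T}^{+})^{\ast}\|^{-1}\|h\|$ for $h\in\mathcal{R}(\mathcal{T})$, arriving at the same bounds $A\|(\mathcal{T}^{+})^{\ast}\|^{-2}$ and $B\|\mathcal{T}\|^{2}$. The one place you diverge is exactly the step you flag as delicate, and there your version is the more careful one. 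The paper writes $K^{\ast}f=(\mathcal{T}^{+})^{\ast}\mathcal{T}^{\ast}K^{\ast}f$ for $f\in\mathcal{R}(\mathcal{T})$; since $(\mathcal{T}\mathcal{T}^{+})^{\ast}$ is the orthogonal projection onto $\mathcal{R}(\mathcal{T})$, this identity amounts to asserting $K^{\ast}f\in\mathcal{R}(\mathcal{T})$, which, as you observe, does not follow from $\mathcal{T}K=K\mathcal{T}$ alone (that hypothesis gives $K^{\ast}\mathcal{N}(\mathcal{T}^{\ast})\subseteq\mathcal{N}(\mathcal{T}^{\ast})$ and $K\mathcal{R}(\mathcal{T})\subseteq\mathcal{R}(\mathcal{T})$, but not $K^{\ast}\mathcal{R}(\mathcal{T})\subseteq\mathcal{R}(\mathcal{T})$). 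Your remedy --- decomposing $K^{\ast}f$ along $\mathcal{R}(\mathcal{T})\oplus\mathcal{N}(\mathcal{T}^{\ast})$ and reading $K$ as $K|_{\mathcal{R}(\mathcal{T})}$ with adjoint $P_{\mathcal{R}(\mathcal{T})}K^{\ast}|_{\mathcal{R}(\mathcal{T})}$, which is the only sensible interpretation of a $K$-biframe for the subspace $\mathcal{R}(\mathcal{T})$ --- closes this gap at no cost to the constants. So your proposal is not merely equivalent to the paper's proof; it repairs an unjustified step in it.
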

\begin{proof}
 Since $\mathcal{T}\in \mathcal{B}(\mathcal{H})$ has a closed range. Then for each $f \in \mathcal{R}(\mathcal{T})$, $$K^* f=\left(\mathcal{T}^{+}\right)^{\ast} \mathcal{T}^{\ast} K^* f,$$ so we have
$$
\left\|K^* f\right\|=\left\|\left(\mathcal{T}^{+}\right)^{\ast} \mathcal{T}^{\ast} K^* f\right\| \leq\left\|\left(\mathcal{T}^{+}\right)^{\ast}\right\|\left\|\mathcal{T}^{\ast} K^* f\right\| .
$$

Hence $$\left\|\left(\mathcal{T}^{+}\right)^{\ast}\right\|^{-1}\left\|K^* f\right\| \leq\left\|\mathcal{T}^{\ast} K^* f\right\|.$$
 Since $(\mathcal{X}, \mathcal{Y})$ is a continuous $K$-biframe with frame bounds $A, B$, then for each $f \in \mathcal{R}(\mathcal{T})$, we have
$$
\begin{aligned}
 \int_{\Omega}\langle  f, \mathcal{T}\mathcal{X}(\omega)\rangle \langle \mathcal{T} \mathcal{Y}(\omega), f\rangle d\mu  & =\int_{\Omega}\langle  \mathcal{T}^{\ast}f, \mathcal{X}(\omega)\rangle \langle  \mathcal{Y}(\omega), \mathcal{T}^{\ast}f\rangle d\mu \\ &\geq A\left\|K^* \mathcal{T}^{\ast} f\right\|^2 \\
& =A\left\|\mathcal{T}^{\ast} K^* f\right\|^2 \\
& \geq A\left\|\left(\mathcal{T}^{+}\right)^{\ast}\right\|^{-2}\left\|K^* f\right\|^2 .
\end{aligned}
$$
On the other hand, we have
$$
\int_{\Omega}\langle  f, \mathcal{T}\mathcal{X}(\omega)\rangle \langle \mathcal{T} \mathcal{Y}(\omega), f\rangle d\mu  =\int_{\Omega}\langle  \mathcal{T}^{\ast}f, \mathcal{X}(\omega)\rangle \langle  \mathcal{Y}(\omega), \mathcal{T}^{\ast}f\rangle d\mu\leq \mu\left\|\mathcal{T}^{\ast} x\right\|^2 \leq \mu\|T\|^2\|x\|^2 .
$$
Hence $$A\left\|\left(\mathcal{T}^{+}\right)^{\ast}\right\|^{-2}\left\|K^* f\right\|^2\leq\int_{\Omega}\langle  f, \mathcal{T}\mathcal{X}(\omega)\rangle \langle \mathcal{T} \mathcal{Y}(\omega), f\rangle d\mu  \leq B\|\mathcal{T}\|^2\|x\|^2 .$$ 
Therefore $(\mathcal{T} \mathcal{X}, \mathcal{T} \mathcal{Y})$ is a continuous $K$-biframe for $\mathcal{R}(\mathcal{T})$.
\end{proof}
\begin{theorem}
Let $K \in \mathcal{B}(\mathcal{H})$ be with a dense range. Let  $(\mathcal{X}, \mathcal{Y})$ be a continuous $K$-biframe and suppose $\mathcal{T} \in \mathcal{B}(\mathcal{H})$ have a closed range. If $(\mathcal{T}\mathcal{X},\mathcal{T} \mathcal{Y})$ and $(\mathcal{T}^{\ast}\mathcal{X},\mathcal{T}^{\ast} \mathcal{Y})$ are continuous $K$-biframes then $\mathcal{T}$ is invertible.
\end{theorem}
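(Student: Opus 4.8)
The plan is to deduce that $\mathcal{T}$ is a bounded bijection, so that invertibility follows from the bounded inverse (open mapping) theorem. The guiding observation is that the hypotheses are symmetric in $\mathcal{T}$ and $\mathcal{T}^{\ast}$: the data $K$ (dense range) and $(\mathcal{X},\mathcal{Y})$ (a continuous $K$-biframe) are fixed, and we are given that \emph{both} $(\mathcal{T}\mathcal{X},\mathcal{T}\mathcal{Y})$ and $(\mathcal{T}^{\ast}\mathcal{X},\mathcal{T}^{\ast}\mathcal{Y})$ are continuous $K$-biframes. Hence the surjectivity theorem proved above can be invoked twice, once for $\mathcal{T}$ and once for $\mathcal{T}^{\ast}$.

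First I would apply that theorem verbatim: since $K$ has dense range, $(\mathcal{X},\mathcal{Y})$ is a continuous $K$-biframe, $\mathcal{T}$ has closed range, and $(\mathcal{T}\mathcal{X},\mathcal{T}\mathcal{Y})$ is a continuous $K$-biframe, it follows that $\mathcal{T}$ is surjective, i.e. $\mathcal{R}(\mathcal{T})=\mathcal{H}$. Next I would repeat the argument with $\mathcal{T}^{\ast}$ in the role of $\mathcal{T}$. The one auxiliary fact needed is that a bounded operator on a Hilbert space has closed range if and only if its adjoint does, so $\mathcal{T}^{\ast}$ also has closed range; as $(\mathcal{T}^{\ast}\mathcal{X},\mathcal{T}^{\ast}\mathcal{Y})$ is a continuous $K$-biframe and $(\mathcal{T}^{\ast})^{\ast}=\mathcal{T}$, the same theorem yields that $\mathcal{T}^{\ast}$ is surjective, i.e. $\mathcal{R}(\mathcal{T}^{\ast})=\mathcal{H}$.

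Finally I would combine the two statements. From $\mathcal{R}(\mathcal{T}^{\ast})=\mathcal{H}$ and the standard identity $\mathcal{N}(\mathcal{T})=\mathcal{R}(\mathcal{T}^{\ast})^{\perp}$ we get $\mathcal{N}(\mathcal{T})=\{0\}$, so $\mathcal{T}$ is injective; together with $\mathcal{R}(\mathcal{T})=\mathcal{H}$ this shows $\mathcal{T}$ is bijective and therefore invertible. Tracing the proof of the surjectivity theorem for $\mathcal{T}^{\ast}$ gives the injectivity even more directly: the lower bound reads $A\|K^{\ast}f\|^{2}\le\int_{\Omega}\langle \mathcal{T}f,\mathcal{X}(\omega)\rangle\langle \mathcal{Y}(\omega),\mathcal{T}f\rangle\,d\mu$, so $\mathcal{T}f=0$ forces $K^{\ast}f=0$, whence $\mathcal{N}(\mathcal{T})\subseteq\mathcal{N}(K^{\ast})=\{0\}$ by injectivity of $K^{\ast}$. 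I expect the only genuine obstacle to be this second application: one must confirm that $\mathcal{T}^{\ast}$ inherits the closed-range hypothesis and that no other hypothesis of the surjectivity theorem secretly singles out $\mathcal{T}$ rather than $\mathcal{T}^{\ast}$. Once that is checked, the remainder is routine Hilbert-space bookkeeping.
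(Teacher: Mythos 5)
Your proposal is correct and follows essentially the same route as the paper: the first biframe hypothesis gives surjectivity of $\mathcal{T}$ (exactly the earlier surjectivity theorem), and the second gives injectivity, since the lower bound forces $\mathcal{N}(\mathcal{T})\subseteq\mathcal{N}(K^{\ast})=\{0\}$ --- which is precisely the direct argument the paper uses and which you also spell out. Your packaging via a second application of the surjectivity theorem to $\mathcal{T}^{\ast}$ (using that $\mathcal{T}^{\ast}$ has closed range and $\mathcal{N}(\mathcal{T})=\mathcal{R}(\mathcal{T}^{\ast})^{\perp}$) is a valid, marginally tidier reformulation of the same idea.
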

\begin{proof}
 Assume that $(\mathcal{T}\mathcal{X},\mathcal{T} \mathcal{Y})$  is a continuous $K$-biframe for $\mathcal{H}$ with frame bounds $A_1$ and $B_1$. Then for every $f\in \mathcal{H}$,
\begin{equation}\label{eq42}
A_1\left\|K^* f\right\|^2 \leq \int_{\Omega}\langle  f, \mathcal{T}\mathcal{X}(\omega)\rangle \langle \mathcal{T} \mathcal{Y}(\omega), f\rangle d\mu  \leq B_1\|x\|^2 .
\end{equation}
Since $K$ has a dense range, $K^*$ is injective. Then from (\ref{eq42}), it follows that $\mathcal{T}^{\ast}$ is injective as $N\left(\mathcal{T}^{\ast}\right) \subseteq N\left(K^*\right)$. Moreover 
$$\mathcal{R}(\mathcal{T})=\mathcal{N}\left(\mathcal{T}^{\ast}\right)^{\perp}=\mathcal{H}.$$ 
Then $\mathcal{T}$ is surjective.

Suppose that $(\mathcal{T}^{\ast}\mathcal{X},\mathcal{T}^{\ast} \mathcal{Y})$  is a continuous $K$-biframe for $\mathcal{H}$ with frame bounds $A_2$ and $B_2$. Then for every $f\in \mathcal{H}$,
\begin{equation}\label{eq43}
A_2\left\|K^* f\right\|^2 \leq \int_{\Omega}\langle  f, \mathcal{T}^{\ast}\mathcal{X}(\omega)\rangle \langle \mathcal{T}^{\ast} \mathcal{Y}(\omega), f\rangle d\mu  \leq B_2\|x\|^2 .
\end{equation}

Since $K$ has a dense range, then $K^*$ is injective. From (\ref{eq43}), $\mathcal{T}$ is injective since $\mathcal{N}(\mathcal{T}) \subseteq \mathcal{N}\left(K^*\right)$.

 we can conclude that $\mathcal{T}$ is bijective. Therefore $\mathcal{T}$ is invertible.
\end{proof}
\begin{theorem}
 Let $K \in \mathcal{B}(\mathcal{H})$  be with a dense range. Let  $(\mathcal{X}, \mathcal{Y})$ be a continuous $K$-biframe for $\mathcal{H}$ and let $\mathcal{T}\in \mathcal{B}(\mathcal{H})$ be co-isometry with $\mathcal{T} K=K \mathcal{T}$. Then $(\mathcal{T}\mathcal{X},\mathcal{T} \mathcal{Y})$ is a continuous $K$-biframe for $\mathcal{H}$.
\end{theorem}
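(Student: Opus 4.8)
The plan is to transfer the two continuous $K$-biframe inequalities enjoyed by $(\mathcal{X},\mathcal{Y})$ over to $(\mathcal{T}\mathcal{X},\mathcal{T}\mathcal{Y})$ by moving $\mathcal{T}$ onto the test vector through its adjoint, and then to exploit the two structural hypotheses on $\mathcal{T}$: that it is a co-isometry, so $\mathcal{T}\mathcal{T}^{\ast}=I$ and equivalently $\mathcal{T}^{\ast}$ is an isometry, and that it commutes with $K$. First I would record that weak measurability of $\mathcal{T}\mathcal{X}$ and $\mathcal{T}\mathcal{Y}$ is immediate from the weak measurability of $\mathcal{X},\mathcal{Y}$ together with the boundedness of $\mathcal{T}$, since for each $f\in\mathcal{H}$ one has $\omega\mapsto\langle f,\mathcal{T}\mathcal{X}(\omega)\rangle=\langle \mathcal{T}^{\ast}f,\mathcal{X}(\omega)\rangle$, and likewise for $\mathcal{Y}$.

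The core computation rewrites, for each $f\in\mathcal{H}$,
\[
\int_{\Omega}\langle f,\mathcal{T}\mathcal{X}(\omega)\rangle\langle \mathcal{T}\mathcal{Y}(\omega),f\rangle d\mu=\int_{\Omega}\langle \mathcal{T}^{\ast}f,\mathcal{X}(\omega)\rangle\langle \mathcal{Y}(\omega),\mathcal{T}^{\ast}f\rangle d\mu,
\]
and then applies the continuous $K$-biframe inequalities for $(\mathcal{X},\mathcal{Y})$, with bounds $A$ and $B$, to the vector $\mathcal{T}^{\ast}f$, which yields
\[
A\Vert K^{\ast}\mathcal{T}^{\ast}f\Vert^{2}\leq\int_{\Omega}\langle \mathcal{T}^{\ast}f,\mathcal{X}(\omega)\rangle\langle \mathcal{Y}(\omega),\mathcal{T}^{\ast}f\rangle d\mu\leq B\Vert \mathcal{T}^{\ast}f\Vert^{2}.
\]
For the upper bound I would invoke the isometry of $\mathcal{T}^{\ast}$, so that $\Vert\mathcal{T}^{\ast}f\Vert=\Vert f\Vert$ turns the right-hand side into $B\Vert f\Vert^{2}$. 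For the lower bound the decisive move is to take adjoints in $\mathcal{T}K=K\mathcal{T}$ to obtain $K^{\ast}\mathcal{T}^{\ast}=\mathcal{T}^{\ast}K^{\ast}$; consequently $\Vert K^{\ast}\mathcal{T}^{\ast}f\Vert=\Vert\mathcal{T}^{\ast}K^{\ast}f\Vert=\Vert K^{\ast}f\Vert$, once more by the isometry of $\mathcal{T}^{\ast}$. Combining the two estimates gives
\[
A\Vert K^{\ast}f\Vert^{2}\leq\int_{\Omega}\langle f,\mathcal{T}\mathcal{X}(\omega)\rangle\langle \mathcal{T}\mathcal{Y}(\omega),f\rangle d\mu\leq B\Vert f\Vert^{2},
\]
so that $(\mathcal{T}\mathcal{X},\mathcal{T}\mathcal{Y})$ is a continuous $K$-biframe for $\mathcal{H}$ with the very same bounds $A$ and $B$.

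The argument is short, and the only genuinely delicate point is the lower bound: one must observe that the commutation relation $\mathcal{T}K=K\mathcal{T}$ survives adjunction and then combine it with the isometry of $\mathcal{T}^{\ast}$ in precisely the right order, so that the lower frame constant $A$ is preserved exactly rather than merely dominated by some operator norm. I would also remark that, unlike in the neighbouring theorems, the dense-range hypothesis on $K$ is not actually used in this direction; the conclusion rests solely on $\mathcal{T}$ being a co-isometry that commutes with $K$.
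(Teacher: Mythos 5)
Your proposal is correct and follows essentially the same route as the paper: rewrite the integral via $\mathcal{T}^{\ast}$, apply the $K$-biframe inequalities to $\mathcal{T}^{\ast}f$, and use $K^{\ast}\mathcal{T}^{\ast}=\mathcal{T}^{\ast}K^{\ast}$ together with the isometry of $\mathcal{T}^{\ast}$ to recover $A\Vert K^{\ast}f\Vert^{2}$ below. The only cosmetic difference is that you obtain the upper bound $B\Vert f\Vert^{2}$ exactly from $\Vert\mathcal{T}^{\ast}f\Vert=\Vert f\Vert$, while the paper writes $B\Vert\mathcal{T}\Vert^{2}\Vert f\Vert^{2}$ (the same thing, since a co-isometry has norm one); your observation that the dense-range hypothesis on $K$ is not used here is also accurate.
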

\begin{proof}
Assume that $(\mathcal{X}, \mathcal{Y})$ is a continuous $K$-biframe for $\mathcal{H}$.  Then there are positive constants $0<A \leq B<\infty$ such that
$$
A\left\|K^* f\right\|^2 \leq  \int_{\Omega}\langle  f, \mathcal{X}(\omega)\rangle \langle  \mathcal{Y}(\omega), f\rangle d\mu \leq B\|x\|^2, \text { for all } f \in \mathcal{H} .
$$
Let $\mathcal{T}\in \mathcal{B}(\mathcal{H})$ be co-isometry with $\mathcal{T} K=K \mathcal{T}$. Then for each $f\in \mathcal{H}$, we have
$$
\begin{aligned}
\int_{\Omega}\langle  f, \mathcal{T}\mathcal{X}(\omega)\rangle \langle \mathcal{T} \mathcal{Y}(\omega), f\rangle d\mu  &=\int_{\Omega}\langle  \mathcal{T}^{\ast}f, \mathcal{X}(\omega)\rangle \langle  \mathcal{Y}(\omega), \mathcal{T}^{\ast}f\rangle d\mu\\ &\geq A\left\|K^*\mathcal{T}^{\ast} f\right\|^2 \\
& =A\left\|\mathcal{T}^{\ast} K^* f\right\|^2 \\
& =A\left\|K^* f\right\|^2. 
\end{aligned}
$$
On the other hand, for $f \in \mathcal{H}$ all we have 
$$\int_{\Omega}\langle  f, \mathcal{T}\mathcal{X}(\omega)\rangle \langle \mathcal{T} \mathcal{Y}(\omega), f\rangle d\mu =\int_{\Omega}\langle  \mathcal{T}^{\ast}f, \mathcal{X}(\omega)\rangle \langle  \mathcal{Y}(\omega), \mathcal{T}^{\ast}f\rangle d\mu\leq B\|\mathcal{T}^{\ast}x\|^2 \leq B\|\mathcal{T}\|^2\|x\|^2. $$
Hence $(\mathcal{T}\mathcal{X},\mathcal{T} \mathcal{Y})$ is a continuous $K$-biframe for $\mathcal{H}$.
\end{proof}

\medskip

\section*{Declarations}

\medskip

\noindent \textbf{Availablity of data and materials}\newline
\noindent Not applicable.

\medskip

\noindent \textbf{Competing  interest}\newline
\noindent The authors declare that they have no competing interests.

\medskip

\noindent \textbf{Fundings}\newline
\noindent  Authors declare that there is no funding available for this article.

\medskip

\noindent \textbf{Authors' contributions}\newline
\noindent The authors equally conceived of the study, participated in its
design and coordination, drafted the manuscript, participated in the
sequence alignment, and read and approved the final manuscript. 

\medskip

\end{document}